\newcommand{\cB}{\ensuremath{\mathcal{B}}}
\newcommand{\cI}{\ensuremath{\mathcal{I}}}
\newcommand{\cM}{\ensuremath{\mathcal{M}}}
\newcommand{\cN}{\ensuremath{\mathcal{N}}}
\newcommand{\cO}{\ensuremath{\mathcal{O}}}
\newcommand{\cP}{\ensuremath{\mathcal{P}}}
\newcommand{\cT}{\ensuremath{\mathcal{T}}}
\newcommand{\bN}{\ensuremath{\mathbb{N}}}
\newcommand{\bR}{\ensuremath{\mathbb{R}}}
\newcommand{\rT}{\ensuremath{\mathrm{T}}}
\newcommand{\rU}{\ensuremath{\mathrm{U}}}
\newcommand{\rY}{\ensuremath{\mathrm{Y}}}
\newcommand{\rZ}{\ensuremath{\mathrm{Z}}}
\newtheorem{theorem}{Theorem}[section]
\newtheorem{lemma}[theorem]{Lemma}
\newtheorem{corollary}[theorem]{Corollary}
\newtheorem{remark}[theorem]{Remark}
\theoremstyle{definition}
\newtheorem{definition}[theorem]{Definition}
\numberwithin{equation}{section}
\newcommand\cost{\ensuremath{\textrm{cost}}}
\newcommand{\be}{\begin{equation}}
\newcommand{\ee}{\end{equation}}
\newcommand{\Exp}[1]{{\rm Exp}_{#1}}
\newcommand{\Log}[1]{{\rm Log}_{#1}}
\newcommand\cdf[1]{\operatorname{cdf}_{#1}}
\newcommand\icdf[1]{\operatorname{icdf}_{#1}}
\newcommand{\eps}{\ensuremath{\varepsilon}}
\newcommand{\charFun}{\ensuremath{\mathds{1}}}
\newcommand{\opt}{\ensuremath{\mathrm{opt}}}
\DeclareMathOperator*{\argmin}{arg\,min}
\newcommand{\rd}{\ensuremath{\mathrm d}}
\newcommand{\dx}{\ensuremath{\mathrm dx}}
\DeclareMathOperator*{\vspan}{span}
\def\[{\Bigl [}
\def\]{\Bigr ]}
\def\({\Bigl (}
\def\){\Bigr )}
\def\<{\left <}
\def\>{\right >}
\newcommand{\wc}{\ensuremath{\mathrm{wc}}}
\newcommand{\av}{\ensuremath{\mathrm{av}}}
\newcommand{\tr}{\ensuremath{\mathrm{tr}}}
\newcommand{\test}{\ensuremath{\mathrm{test}}}
\newcommand{\gBar}{\ensuremath{\mathrm{gBar}}}
\newcommand{\tPCA}{\ensuremath{\mathrm{tPCA}}}
\newcommand{\PCA}{\ensuremath{\mathrm{PCA}}}
\newcommand{\xmin}{\ensuremath{x_{\min}}}
\newcommand{\xmax}{\ensuremath{x_{\max}}}
\renewcommand{\Pr}{\ensuremath{\cP_2(\Omega)}}
\newcommand{\bary}{\ensuremath{\mathrm{Bar}}}
\newcommand{\Conv}{\ensuremath{\mathrm{Conv}}}
\newcommand{\cMtest}{\ensuremath{{\cM}_{\text{test}}}}
\definecolor{verde}{rgb}{0.2, 0.5, 0.3}
\newcommand\corr[1]{\textcolor{black}{#1}}
\begin{document}
\title{
Nonlinear model reduction on metric spaces.\\Application to one-dimensional conservative PDEs in Wasserstein spaces.
}
\author{V.~Ehrlacher, D.~Lombardi, O.~Mula and F.-X.~Vialard}
\date{}
\maketitle

\abstract{
We consider the problem of model reduction of parametrized PDEs where the goal is to approximate any function belonging to the set of solutions at a reduced computational cost. 
For this, the bottom line of most strategies has so far been based on the approximation of the solution set by linear spaces on Hilbert or Banach spaces. This approach can be expected to be successful only when 
the Kolmogorov width of the set decays fast. While this is the case on certain parabolic or elliptic problems, most transport-dominated problems are expected to present a slow decaying 
width and require to study nonlinear approximation methods. In this work, we propose to address the reduction problem from the perspective of general metric spaces with a suitably defined notion of distance. 
We develop and compare two different approaches, one based on barycenters and another one using tangent spaces when the metric space has an additional Riemannian structure. 
\corr{Since the notion of linear vectorial spaces does not exist in general metric spaces, both approaches result in nonlinear approximation methods}. We give theoretical and numerical evidence of their efficiency to reduce complexity for one-dimensional conservative PDEs where the underlying metric space can be chosen to be the $L^2$-Wasserstein space.
}

\section{Introduction}
In modern applications of science, industry and numerous other fields, the available time for design and decision-making is becoming shorter, and some tasks 
are even required to be performed in real time. The process usually involves predictions of the state of complex systems which, in order to be reliable, need to be described by sophisticated models. 
The predictions are generally the output of inverse or optimal control problems that are formulated on these models and which cannot be solved in real time unless the overall complexity has 
been appropriately reduced. Our focus lies in the case where the model is given by a Partial Differential Equation (PDE) that depends on certain parameters. In this setting, 
the routines for prediction require to evaluate solutions of the PDE on a large set of dynamically updated parameters. This motivates the search for accurate and online methods to approximate the solutions at a reduced computational cost. This task, usually known as \emph{reduced modelling} or \emph{model order reduction}, can be summarized as follows.

Let $\Omega$ be a domain of $\bR^{\corr{D}}$ for a given dimension $\corr{D}\geq 1$ and let $(V,d)$ be a metric space with metric $d$, \corr{containing the set of solutions of a given parametric PDE} defined over the domain $\Omega$. 
The main goal of model reduction is to approximate 
as accurately and quickly as possible the solution $u(z)\in V$ of a problem of the form
\begin{equation}
\label{eq:pde}
\cP(u(z),z) = 0
\end{equation}
for many different values of a vector $z = (z_1,\dots, z_p)$ in a certain range $\rZ \subset \bR^p$. In the above formula, $\cP$ is a differential or integro-differential operator parametrized by $z$, 
and we assume that for each $z \in \rZ$ there exists a unique solution $u(z)\in V$ to problem \eqref{eq:pde}. The set of all solutions is defined as
\begin{equation}
\cM \coloneqq \{ u(z)\: :\: z\in\rZ \} \subset V,
\end{equation}
and is often referred to as the solution manifold with some abuse of terminology\footnote{This set of solutions may not be a differentiable manifold in infinite dimensions.}.

In the context of model reduction, $V$ is traditionally chosen to be a Banach or a Hilbert space with the metric given by its norm denoted by $\Vert \cdot \Vert$. \corr{However, relevant classes of problems could be posed either on Banach spaces or on metric spaces and the latter characterization \emph{may be} more convenient for model reduction in some situations.} To name a few examples involving gradient flows, we cite \cite{GO2001} for Hele-Shaw flows, \cite{GST2009} for quantum problems, \cite{Otto2001} for porous media flows, \cite{JKO1998} for Fokker-Planck equations and \cite{ZM2015, BL2013} for Keller-Segel models in chemotaxis. Other examples involving metric spaces that are not necessarily related to gradient flows are \cite{BF2005} for the Camassa-Holm equation, \cite{CGH2019} for the Hunter-Saxton equation. Such examples can often be interpreted as a geodesic flow on a group of diffeomorphisms and can thus be encoded as Hamiltonian flows. \corr{In addition to this, there are other problems which cannot be defined on Banach vector spaces and can only be defined over metric spaces. Consider for instance the case of a pure transport equation with constant velocity where the initial data is a Dirac measure concentrated on one point. The solution of this PDE remains  at all times a (translated) Dirac mass. More generally, it has been proven that solutions to certain nonlinear dissipative evolution equations with measure-valued initial data are measure-valued and do not belong to some standard Lebesgue or Sobolev spaces. They can however be formulated in the form of Wasserstein gradient flows.}

\corr{It appears therefore that extending the notion of model reduction in Banach/Hilbert spaces to more general metric spaces could enlarge the scope of problems that can potentially be addressed. Since the notion of linear vectorial spaces does not exist in general metric spaces, both approaches result in nonlinear approximation methods.} To develop further on the potential interest brought by nonlinear model reduction on metric spaces, let us briefly recall the classical lines followed for model reduction on Banach/Hilbert spaces. 
Assume for now that $V$ is a Banach space. Most methods are typically based on determining a ``good'' $n$-dimensional subspace $V_n=\vspan\{v_1,\dots, v_n\}\subset V$ that
yields efficient approximations of $u(z)$ in $V_n$ of the form
\begin{equation}
\label{eq:decomposition}
u_n (z) \coloneqq \sum_{i=1}^n c_i(z) v_i
\end{equation}
for some coefficients $c_1(z),\cdots, c_n(z)\in \mathbb{R }$. This approach is the backbone of most existing methods among which stand the reduced basis method (\cite{HRS2015,QMN2016}), the empirical interpolation method and 
its generalized version (G-EIM, \cite{BMNP2004,GMNP2007,MM2013,MMT2016}), Principal Component Analysis (PCA, see \cite[Chapter 1]{BCOW2017}), polynomial-based methods like \cite{CDS2010,CDS2011} or low-rank methods (\cite{KS2011}).

The approximation quality of the obtained subspace $V_n$ is either measured through the worst case error
\begin{equation}
\label{eq:wc}
e_{\wc}(\cM, V, V_n)
\coloneqq
\sup_{z\in\rZ} \inf_{w_n\in V_n} d\( u(z), w_n \),
\end{equation}
or the average error
\begin{equation}
\label{eq:av}
e_{\av}(\cM, V, V_n)
\coloneqq
\left(
\int_{z\in\rZ} \inf_{w_n\in V_n}  d^2\( u(z), w_n \)\,\mathrm{d} \mu(z)
\right)^{1/2},
\end{equation}
where $\mu$ is a probability measure on $\rZ$, given a priori and from which the parameters are sampled.

The reduction method is considered efficient if $e_{\wc}(\cM, V, V_n)$ (or $e_{\av}(\cM, V, V_n)$) decays rapidly to 0 as $n$ goes to $\infty$. There is sound evidence of efficiency only in the case of certain elliptic and parabolic PDEs. More precisely, it has been shown in~\cite{CD2015} that for this type of equations, under suitable assumptions, the $L^\infty$ Kolmogorov width defined as 
\begin{equation}\label{eq:widthLinfty}
d_n(\cM, V) \coloneqq \inf_{\begin{array}{c}
       V_n \subset V,\\
       \mbox{\rm dim}\; V_n = n\\
      \end{array}} e_{\wc}(\cM, V, V_n)
\end{equation}
and the $L^2$ Kolmogorov width
\begin{equation}
\label{eq:widthL2}
\delta_n(\cM, V) \coloneqq \inf_{\begin{array}{c}
       V_n \subset V,\\
       \mbox{\rm dim}\; V_n = n\\
      \end{array}} e_{\av}(\cM, V, V_n)
\end{equation}
decay exponentially or polynomially with high exponent as $n$ grows. In the context of model reduction, this quantity gives the best possible performance that one can achieve when approximating $\cM$ with $n$-dimensional linear spaces.

Optimal linear subspaces $V_n\subset V$ of dimension $n$ which realize the infimum of \eqref{eq:widthLinfty} cannot be computed in practice in general. However, it has been shown that greedy algorithms can be used to build sequences of linear spaces $(V_n)_{n\geq 1}$ whose approximation error $e_{\wc}(\cM, V_n)$ decay at a comparable rate as the Kolmogorov $n$-width $d_n(\cM,V)$. These algorithms are the backbone of the so-called Reduced Basis method~\cite{BCDDPW2011}. In the case of \eqref{eq:widthL2}, the optimal subspaces for which the minimum is attained are obtained using
the PCA \corr{or Proper Orthogonal Decomposition (POD)} method.

In this paper, our goal is to extend the above notion of model reduction to more general metric spaces in view of the following facts. 
First of all, in the context of Banach or Hilbert spaces, linear methods are unfortunately not well suited for hyperbolic problems. Among others, this is due to the transport of shock discontinuities 
whose locations may vary together with the parameters. \corr{It was proved in~\cite[Chapter 3, see equation (3.76)]{BCOW2017} that }the $L^\infty$ Kolmogorov width of simple pure transport problems decays very slowly, at a rate $n^{-1/2}$ if $V=L^2$ (similar examples can be found in \cite{Welper2015, OR2016, BCMN2018}). The same type of result has recently been 
derived for wave propagation problems in \cite{GK2019}. These results highlight that linear methods of the type \eqref{eq:decomposition} are not expected to provide a fast decay in numerous 
transport dominated problems, and may be highly suboptimal in terms of the trade off between accuracy and numerical complexity. For these classes of problems, an efficient strategy 
for model reduction requires to look for \emph{nonlinear methods} that capture the geometry of $\cM$ in a finer manner than linear spaces. In addition to 
the idea of searching for nonlinear methods, it may be beneficial to move from the classical Banach/Hilbert metric framework to more general metric spaces in order to better quantify the ability to capture 
specific important features like translations or shifts. Finally, as already brought up, this broader setting enlarges the scope of problems that can be treated.

We next describe the state of the art methods proposed to go beyond the linear case and to address hyperbolic problems. Then, we summarize the contributions and the organization of this paper.

\subsection{State of the art}
\indent
\par{\textbf{Works on adaptivity.}}
Several works try to circumvent the poor representation given by linear spaces by building local spaces. In \cite{carlberg2015adaptive} a strategy inspired by the mesh $h-$refinement is proposed. In \cite{amsallem2012nonlinear,amsallem2016pebl} a construction of linear subspaces is proposed, based on a $k$-means clustering that groups together similar snapshots of the solution. A similar approach is presented in \cite{peherstorfer2014localized} to overcome some shortcomings of the DEIM approach. In the two recent preprints \cite{lee2018model,gonzalez2018learning}, the subspaces to be used for model reduction are identified by using autoencoders and neural networks. 

\par{\textbf{Stabilization strategies.}}
Advection dominated problems tend to show potential instabilities when reduced-order integrators are built. In \cite{maday2016online, torlo2018stabilized} an online stabilization is proposed. A form of stabilization based on an $L^1$ minimisation problem is proposed in \cite{abgrall2016robust}.

\par{\textbf{Reduced-order modeling from a geometrical point of view.}}
Several works in the literature propose to transform the snapshots by making a Lie group acting on them. In \cite{mowlavi2018model} the authors propose a method to get rid of continuous symmetries in parametric dynamical systems whose dynamics is invariant under continuous group action\footnote{On the KdV equations these symmetries were studied analytically in \cite{gazeau1992symmetries}}. A similar approach was proposed in \cite{ohlberger2013nonlinear}.

Conversely to transporting the snapshots, dynamical bases approaches are defined in which the subspace used to approximate the solution evolves in time. An example of such a method is the Dynamically Orthogonal (DO) decomposition (detailed in \cite{koch2007,KL2010,MNZ2015,FL2018}). We also cite \cite{AF2008} and a recent extension \cite{MHHA2018}, which have focused on model reduction over the Grassman manifold.

In the context of Hamiltonian systems, \cite{afkham2017structure, HP2018} introduce a reduced-order framework that preserves the symplectic structure of the dynamical system.

\par{\textbf{Non-linear transformations.}}
In \cite{Welper2015,Welper2017}, a transformed snapshot interpolation method is introduced, aiming at finding a set of non-linear transformations such that the approximation of the transformed snapshots by a linear combination of modes is efficient. In \cite{nair2018transported}, which addresses compressible fluid-dynamics, the set of the snapshots is transported into a reference configuration by a displacement field which is identified through a polynomial expansion. In \cite{IL2014}, the authors propose a non-linear model reduction strategy based on optimal transport and the reduction of optimal transport maps. Another strategy based on non-linear transformations was proposed in \cite{cagniart2019model} to deal with hyperbolic problems with applications to transonic flows around airfoils. Finally, the recent work  \cite{cagniart2019model} introduces a general framework to look for reference configurations and for transformations depending upon few parameters in order to deal with advection dominated problems.
  

\subsection{Contribution and organization of the paper}
\textbf{The main contribution of this work is to develop the idea of reduced modeling in metric spaces.} For this:
\begin{itemize}
\item We give theoretical evidence on two simple PDEs (pure transport and Burgers' equation in 1D) that it is beneficial to do model reduction on metric spaces rather than on more classical Banach spaces.
\item We develop two model reduction strategies on general metric spaces, one based on tangent spaces, the second one based on barycenters.
\end{itemize}
Our approach is, to the best of our knowledge, novel in the field of reduced modelling. The first approach is however related to the so-called tangent PCA, which has drawn significant 
interest in numerous fields like pattern recognition, shape analysis, medical imaging, computer vision \cite{Fletcher2004PGA,Sommer2014}. More recently, it has also been used 
in statistics \cite{HuckemannPCA} and machine learning to study families of histograms and probability densities \cite{CSBCP2017}. Our second approach based on barycenters is entirely
novel to the best of our knowledge, and it could be used as an alternative to tPCA in other applications apart from model reduction.

As a support for our numerical tests, we will consider the model reduction of one-dimensional conservative PDEs in the $L^2$-Wasserstein metric. Time will be seen as one of 
the parameters. The precise setting and notation is the following. For a given open interval $\Omega\subset \bR$ and a given final time $T>0$, \corr{we define $\mathcal{D}'(\Omega)$ the set of real-valued distributions defined on $\Omega$ 
and consider a 1d conservative PDE that depends on $\tilde p\in \bN^*\corr{:=\bN\setminus \{0\}}$ parameters. These parameters belong to a compact set denoted by $\rY\subset \mathbb{R}^{\tilde p}$. For any $y\in \rY$, 
$u_y: [0,T] \to \mathcal{D}'(\Omega)$ is the (distribution-valued) solution to
\begin{equation}\label{eq:oned}
\partial_t u_y(t) - \partial_x F(u_y(t);y,t) =0 \quad \mbox{ in } \mathcal D'(\Omega), \; \forall t\in [0,T],
\end{equation}}
with appropriate initial and boundary conditions. We assume that $F(\cdot;y,t)$ is a real-valued mapping defined on a set of \corr{distributions} defined on $\Omega$ so that the solution to \eqref{eq:oned} is well-defined.

\corr{
We denote by $\Pr$ the set of probability measures on $\Omega$ with finite second-order moments. With a slight abuse of notation, $\Pr$ can be seen as a subset of 
$\mathcal{D}'(\Omega)$ in the sense that for all $\nu \in \Pr$, the linear application $T_\nu: \mathcal{D}(\Omega) \to \bR$ defined by
$$
\forall \phi \in \mathcal{D}(\Omega), \quad T_\nu(\phi) = \int_\Omega \phi \,d\nu
$$
defines a distribution on $\Omega$.
}

\corr{
In the following, we assume that
\begin{equation}\label{eq:mainass}
\forall (t,y)\in Z\coloneqq [0,T]\times Y, \quad u_y(t) = T_{\nu_y(t)} \quad \mbox{ for some }\nu_y(t)\in \Pr .
\end{equation}
In order not to overload notation, we will write the measure $\nu_y(t)\in \Pr$ with the notation $u_y(t)$ with a slight abuse of notation. Thus, $u_y(t)$ will denote the measure $\nu_y(t)\in \Pr$ such that \eqref{eq:mainass} holds. In the case when an element $u\in \Pr$ is absolutely continuous with respect to the Lebesgue measure, we denote by $\rho_u$ its density, so that $\,du(x) = \rho_u(x)\,dx$.
}

Since time is an additional parameter, our parameter set is
$$
Z \coloneqq [0,T] \times \rY \subset \bR^p,\quad p\coloneqq \tilde p +1
$$
thus the solution set is 
$$
\cM \coloneqq \{ u(z) = u_y(t) \in \Pr \,:\, z=(t,y) \in Z \} \subset \Pr.
$$
\textbf{The paper is organized as follows:}
We begin by recalling in Section~\ref{sec:nota} basic notions on metric spaces, and more specifically on the $L^2$-Wasserstein space in one dimension.
We then highlight in Section~\ref{sec:kolmo} the interest of doing model reduction of conservative PDEs in metric spaces. For two simple PDEs (a pure transport equation and an inviscid Burgers' equation in 1D), 
we prove theoretically that this point of view yields better approximation properties than classical linear methods on Banach spaces. In Section \ref{sec:algo}, we describe 
the two numerical methods (tPCA and gBar) that we propose for model reduction in general metric spaces. We also make the different steps of the algorithms explicit in the particular 
case of the $L^2$ Wasserstein space in 1D. \corr{One distinguishing feature of the methods is that they are purely \emph{data-driven} in the sense that the online phase does not involve solving the original PDE in any reduced space/manifold. The methods could thus be seen as a fast \emph{emulator} of the high-fidelity solutions. This is in contrast to traditional projection-based approaches which require solving the original equations in the online phase.} 
Finally, Section \ref{sec:num} gives numerical evidence of the performance of the two algorithms on four different problems: an inviscid and viscous Burgers' equation, a Camassa-Holm 
equation and a Korteveg-de-Vries (KdV) equation. The results illustrate the ability of the proposed methods to capture transport phenomena \corr{at a very reduced computational cost. For the example of the viscous Burgers' equations, the average online computational time of one snapshot is reduced by a factor of about 100 with respect to the high-fidelity computation.} The KdV problem shows some limitations of the methodology in terms of capturing the fusion and separation of peakons and motivates our final section \ref{sec:extensions}, where we list possible extensions to address not only this issue, but also to treat more general multi-dimensional and non-conservative problems.

%
%
%

\section{Metric spaces and Wasserstein space}\label{sec:nota}

Since in our numerical examples we focus on the model reduction of parametric one-dimensional conservative PDEs in the $L^2$-Wasserstein space, 
we start this section by recalling some basic properties of this space. We next recall the definition of exponential and logarithmic maps on general Riemannian manifolds, 
and then detail their expression in the case of the 1D Wasserstein space. Finally, we introduce the notion of barycenters on general metric spaces.


\subsection{Definition of the $L^2$-Wasserstein space in one dimension}\label{sec:not}
Let $\Omega = [\xmin, \xmax] \subset \bR\text{, with } -\infty \leq \xmin<\xmax \leq \infty$ be the domain of interest.
Let $\Pr$ denote the set of probability measures on $\Omega$ with finite second-order moments. For all $u\in \Pr$, we denote by
\begin{equation}
\operatorname{cdf}_u : \left\{
\begin{array}{ccc}
 \Omega &\to & [0,1]  \\
 x & \mapsto & \operatorname{cdf}_u(x) \coloneqq \int_{\xmin}^x\corr{ \,du}\\
\end{array}\right.
\end{equation}
its cumulative distribution function (cdf), and by 
\begin{equation}
\operatorname{icdf}_u: 
\left\{
\begin{array}{ccc}
[0,1] &\to & \Omega  \\
s & \mapsto & \operatorname{cdf}_u^{-1}(s)\coloneqq\inf\{ x\in \Omega, \; \operatorname{cdf}_u(x)>s\}\\
\end{array} \right.
\end{equation}
the generalized inverse of the cdf (icdf). The $L^2$-Wasserstein distance is defined by
$$
W_2(u,v) \coloneqq \mathop{\inf}_{\pi \in \Pi(u,v)} \left( \int_{\Omega \times \Omega} (x-y)^2 \,d\pi(x,y) \right)^{1/2},\quad \forall (u,v)\in \Pr \times \Pr,
$$
where $\Pi(u,v)$ is the set of probability measures on $\Omega \times \Omega$ with marginals $u$ and $v$. In the particular case of one dimensional marginal domains, it can be equivalently expressed using the inverse cumulative distribution functions  as
\begin{equation}\label{EqFlatReduction}
W_2(u,v) = \|\operatorname{icdf}_u - \operatorname{icdf}_v\|_{L^2([0,1])}.
\end{equation}
The space $\Pr$ endowed with the distance $W_2$ is a metric space, usually called $L^2$-Wasserstein space (see \cite{Villani2003} for more details). 

\subsection{Exponential and logarithmic maps}\label{sec:exp}

Let $(V,d)$ be a metric space where $V$ is a Riemannian manifold. 
For any $w\in V$, we denote by $T_w V$ the tangent space to $V$ at point $w$. Then, there exists a subset $\mathcal{O}_w \subset T_wV$ such that the exponential map 
$\Exp{w}: \mathcal{O}_w  \to V$ can be defined and is a diffeomorphism onto its image. 
The logarithmic map $\Log{w}: \Exp{w}(\mathcal{O}_w) \to \mathcal{O}_w$ is then defined as the generalized inverse of $\Exp{w}$. 

\medskip
In the case when $(V,d) = (\Pr,W_2)$, the exponential and logarithmic maps have a particular simple form which we explain next.
We can take advantage of the fact that, after composition with the nonlinear map $\Pr \ni u \mapsto \operatorname{icdf}_u \in L^2([0,1])$, the space $(\Pr,W_2)$ is isometric to $(\cI, \Vert \cdot \Vert_{L^2([0,1])})$, where
\begin{equation}
\mathcal{I} \coloneqq \{ \operatorname{icdf}_u, \; u \in \Pr\}
\end{equation}
is a convex subset of $L^2([0,1])$. The following well-known result then holds.
\begin{theorem}\label{ThIsometry}
The map $\operatorname{icdf}: \Pr \to \mathcal{I}$ defined by $\operatorname{icdf}(u) = \operatorname{icdf}_u$ is an (homeomorphism) isometry between $(\Pr,W_2)$ and $(\mathcal{I},\| \cdot \|_{L^2([0,1])})$. 
\end{theorem}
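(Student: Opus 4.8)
The plan is to verify the three properties that make up an isometric homeomorphism — distance preservation, bijectivity onto $\mathcal{I}$, and bicontinuity — by assembling ingredients already available in the excerpt. Distance preservation is essentially free: for any $u,v \in \Pr$, formula~\eqref{EqFlatReduction} states precisely that $W_2(u,v) = \|\operatorname{icdf}_u - \operatorname{icdf}_v\|_{L^2([0,1])}$, so the map $\operatorname{icdf}$ transports the $W_2$ distance to the $L^2$ distance verbatim. The only genuine work is therefore to show that $\operatorname{icdf}$ is a well-defined bijection onto $\mathcal{I}$ with continuous inverse.

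First I would check well-definedness, namely that $\operatorname{icdf}_u$ actually belongs to $L^2([0,1])$ for every $u \in \Pr$. The key identity is that every $u \in \Pr$ is the pushforward of the Lebesgue measure $\lambda$ on $[0,1]$ by its quantile function, i.e. $u = (\operatorname{icdf}_u)_{\#}\lambda$ — the standard fact that applying the generalized inverse of the cdf to a uniform variable reproduces $u$. A change of variables then yields
\begin{equation*}
\int_0^1 \operatorname{icdf}_u(s)^2 \, ds = \int_\Omega x^2 \, du(x),
\end{equation*}
which is finite precisely because $u$ has finite second moment, that is $u \in \Pr$. Hence $\operatorname{icdf}_u \in L^2([0,1])$, and by the very definition of $\mathcal{I}$ the map is surjective onto $\mathcal{I}$.

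For bijectivity the pushforward identity does double duty. Since every $g \in \mathcal{I}$ is by definition $\operatorname{icdf}_u$ for some $u$, and the map $\Psi : \mathcal{I} \to \Pr$, $\Psi(g) \coloneqq g_{\#}\lambda$, satisfies $\Psi(\operatorname{icdf}_u) = (\operatorname{icdf}_u)_{\#}\lambda = u$, one sees that $\Psi$ is a two-sided inverse of $\operatorname{icdf}$; in particular $\operatorname{icdf}$ is injective, because $\operatorname{icdf}_{u_1} = \operatorname{icdf}_{u_2}$ forces $u_1 = \Psi(\operatorname{icdf}_{u_1}) = \Psi(\operatorname{icdf}_{u_2}) = u_2$. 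Alternatively, injectivity follows at once from the isometry together with the fact, already recalled in the excerpt, that $W_2$ is a genuine metric and hence separates points.

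Finally, bicontinuity is automatic: an isometry is $1$-Lipschitz, and its inverse $\Psi$ is equally $1$-Lipschitz by reading~\eqref{EqFlatReduction} backwards, so $\operatorname{icdf}$ is a homeomorphism. I expect the only delicate point to be the measure-theoretic bookkeeping around the generalized inverse when $u$ has atoms or its cdf has intervals of constancy: one must ensure the pushforward identity $u = (\operatorname{icdf}_u)_{\#}\lambda$ is read at the level of $L^2$-equivalence classes rather than pointwise, since the left/right-continuous normalisation of the quantile function only affects a set of measure zero. Everything of real analytic depth — the optimality of the monotone coupling in one dimension — is already packaged inside~\eqref{EqFlatReduction}, which we take as given, so this bookkeeping is the one step genuinely requiring care.
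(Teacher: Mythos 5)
Your proof is correct and follows essentially the same route as the paper, which likewise deduces everything from the identity \eqref{EqFlatReduction} (taken as given) and defers the remaining details to the cited literature. The only cosmetic difference is that you realize the inverse map as the pushforward $g \mapsto g_{\#}\lambda$ of Lebesgue measure on $[0,1]$, whereas the paper writes it as $f \mapsto \frac{\rd}{\dx}\left[f^{-1}\right]$; these define the same measure, and your extra care about second moments and $L^2$-equivalence classes simply fills in bookkeeping the paper omits.
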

The fact that the map $\operatorname{icdf}$ is an isometry is actually a consequence of \eqref{EqFlatReduction}. Note that the inverse of $\operatorname{icdf}$ can be defined\footnote{The generalized inverse of $f$ is a monotonic function which is of bounded variation and thus has a well-defined non-negative measure as derivative.} as
$$
\operatorname{icdf}^{-1}(f) = \frac{\rd}{\dx} [f^{-1}], \quad \forall f\in \mathcal{I}.
$$
For all $f_0\in \mathcal{I}$, we introduce 
\begin{equation}
\mathcal{K}_{f_0} = \{f \in L^2([0,1];\Omega) \, :\, f_0+f \in \mathcal{I} \}. 
\end{equation}
which is a closed convex subset of $T_{f_0}\mathcal{I}$.

For a given element $w\in \Pr$, using a slight abuse of notation, the exponential map can be advantageously defined on $\mathcal{K}_{\operatorname{icdf}_{w}}$. Indeed, the definition domain $\mathcal{O}_w\subset T_w \mathcal{P}_2(\Omega)$ of 
$\Exp{w}$ is isomorphic to $\mathcal{K}_{\operatorname{icdf}_{w}}$. Using another slight abuse of notation, we will denote in the sequel $\mathcal{O}_w\coloneqq \mathcal{K}_{\operatorname{icdf}_{w}}$. This leads us to the following definition of the exponential and logarithmic maps.

\begin{definition}
Let $w \in \Pr$ be a probability measure. The exponential map $\Exp{w}: \mathcal{O}_w \to \Pr$ is defined as
\begin{equation}
\label{eq:exp}
\Exp{w}(f) = \operatorname{icdf}^{-1}(\operatorname{icdf}_w+ f),\quad \forall f \in \mathcal{O}_{w}.
\end{equation}
It is surjective and we can define its inverse $\Log{w}: \Pr \to \mathcal{O}_w$ as
\begin{equation}
\label{eq:log}
\Log{w}(u) = \operatorname{icdf}_u - \operatorname{icdf}_w,\quad \forall u \in \Pr \,.
\end{equation}
\end{definition}
As a consequence of Theorem \ref{ThIsometry}, the following result holds.
\begin{corollary}\label{ThExp}
The exponential map $\Exp{w}$ is an isometric homeomorphism between $\mathcal{O}_{w}$ and $\Pr$ with inverse $\Log{w}$ and it holds that 
$$
W_2(u,v)=\left\| \Log{w}(u)-\Log{w}(v)\right\|_{L^2([0,1])}, \quad \forall(u,v) \in\Pr\times\Pr.
$$
\end{corollary}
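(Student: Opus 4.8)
The plan is to recognise $\Exp{w}$ as a composition of two isometric homeomorphisms, so that every assertion of the corollary can simply be read off. By Theorem~\ref{ThIsometry}, the map $\operatorname{icdf}:\Pr\to\mathcal{I}$ is an isometric homeomorphism between $(\Pr,W_2)$ and $(\mathcal{I},\|\cdot\|_{L^2([0,1])})$, and hence so is its inverse $\operatorname{icdf}^{-1}:\mathcal{I}\to\Pr$. It therefore suffices to study the affine shift $\tau_w:f\mapsto \operatorname{icdf}_w+f$ and to observe, directly from \eqref{eq:exp}, that $\Exp{w}=\operatorname{icdf}^{-1}\circ\tau_w$.

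First I would verify that $\tau_w$ is an isometric bijection from $\mathcal{O}_w$ onto $\mathcal{I}$. By definition $\mathcal{O}_w=\mathcal{K}_{\operatorname{icdf}_w}=\{f\in L^2([0,1];\Omega):\operatorname{icdf}_w+f\in\mathcal{I}\}$, so $\tau_w$ maps $\mathcal{O}_w$ into $\mathcal{I}$; it is onto because every $g\in\mathcal{I}$ is the image of $g-\operatorname{icdf}_w$, which lies in $\mathcal{O}_w$ precisely since $\operatorname{icdf}_w+(g-\operatorname{icdf}_w)=g\in\mathcal{I}$. As $\tau_w(f)-\tau_w(g)=f-g$, the shift preserves the $L^2([0,1])$ norm, so $\tau_w$ is an isometric homeomorphism. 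Composing with $\operatorname{icdf}^{-1}$ then shows that $\Exp{w}:\mathcal{O}_w\to\Pr$ is an isometric homeomorphism, and in particular a surjection.

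It remains to identify the inverse with $\Log{w}$ and to derive the distance identity. From \eqref{eq:log}, $\Log{w}(u)=\operatorname{icdf}_u-\operatorname{icdf}_w=\tau_w^{-1}(\operatorname{icdf}_u)$, i.e.\ $\Log{w}=\tau_w^{-1}\circ\operatorname{icdf}$; moreover $\operatorname{icdf}_u\in\mathcal{I}$ yields $\operatorname{icdf}_w+\Log{w}(u)=\operatorname{icdf}_u\in\mathcal{I}$, so $\Log{w}(u)\in\mathcal{O}_w$ and $\Log{w}$ is a well-defined map $\Pr\to\mathcal{O}_w$. The identities $\Exp{w}(\Log{w}(u))=\operatorname{icdf}^{-1}(\operatorname{icdf}_u)=u$ and $\Log{w}(\Exp{w}(f))=(\operatorname{icdf}_w+f)-\operatorname{icdf}_w=f$ are then immediate, establishing that $\Log{w}$ is indeed the inverse of $\Exp{w}$. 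Finally, since the shift by $\operatorname{icdf}_w$ cancels in the difference,
$$
\Log{w}(u)-\Log{w}(v)=\operatorname{icdf}_u-\operatorname{icdf}_v,
$$
so $\|\Log{w}(u)-\Log{w}(v)\|_{L^2([0,1])}=\|\operatorname{icdf}_u-\operatorname{icdf}_v\|_{L^2([0,1])}=W_2(u,v)$ by \eqref{EqFlatReduction}.

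The argument is essentially formal once Theorem~\ref{ThIsometry} is granted; the only point requiring genuine care is the bookkeeping around the domain $\mathcal{O}_w$. One must check that $\mathcal{O}_w=\mathcal{K}_{\operatorname{icdf}_w}$ is defined exactly so that $\tau_w$ is a bijection \emph{onto all of} $\mathcal{I}$ and, dually, that $\Log{w}$ lands in $\mathcal{O}_w$ rather than in a strictly larger subset of $L^2([0,1];\Omega)$. I expect this matching of domains to be the main (and essentially only) subtlety; the isometry and continuity of both factors, and hence of their composition, are inherited directly from Theorem~\ref{ThIsometry} together with the fact that translations are isometries of $L^2([0,1])$.
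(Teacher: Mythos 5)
Your proof is correct and follows the same route the paper intends: the paper simply declares the corollary a consequence of Theorem~\ref{ThIsometry} (deferring details to the cited reference), and your decomposition $\Exp{w}=\operatorname{icdf}^{-1}\circ\tau_w$ with the shift $\tau_w$ an isometric bijection of $\mathcal{O}_w$ onto $\mathcal{I}$ is exactly the bookkeeping needed to make that deduction explicit. The final identity via \eqref{EqFlatReduction} matches the paper's stated source of the isometry, so nothing further is needed.
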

For the proofs of Theorems \ref{ThExp} and \ref{ThIsometry}, we refer to \cite{BGKL2017}. We next give two simple common examples of logarithmic and exponential maps:
\begin{itemize}
\item \textbf{Dirac masses:} Let $\Omega=\bR$ and consider the family of Dirac masses $\{\delta_x \,:\, x\in \bR\} \subset \Pr$. For any $x\in \bR$ and $s\in (0,1)$, $\icdf{\delta_x}(s) = x$,
thus $W_2(\delta_{x_1}, \delta_{x_2}) = |x_1-x_2|$ for any $(x_1,x_2)\in \bR\times\bR$. From \eqref{eq:log}, for all $s\in (0,1)$, $\Log{\delta_{x_2}} (\delta_{x_1})(s) = x_1 -x_2$.
\item \textbf{Translations and rescaling:} 
Let $w\in \cP_2(\bR)$ with density \corr{$\rho_w$} and, for $(a, b) \in (0,\infty)\times \bR$, let $w^{(a,b)}$ be the probability measure with density given by
$$
\corr{\rho}^{(a, b)}(x) = \frac 1 a \rho_w \((x-b)/a\),\quad 
$$
In other words, $\{w^{(a,b)}\,:\, (a, b) \in (0,\infty)\times \bR\}$ is the family of shifted and rescaled probabilities around $w$.
Then, 
\begin{equation}\label{eq:simple}
\operatorname{cdf}_{w^{(a,b)}}(x) \coloneqq  \cdf{w}\left(ax+b\right),\; \forall x \in \bR, \quad \operatorname{icdf}_{w^{(a,b)}}(s) \coloneqq \frac{\icdf{w}\left(s\right) -b}{a}, \; \forall s\in [0,1].
\end{equation}
\corr{
Henceforth, the logarithmic map can be written as:
\begin{equation}
\mathrm{Log}_{w}(w^{(a,b)}) = (a^{-1}-1)\icdf{w}(s) - b/a.
\end{equation}
}
\end{itemize}

\subsection{Barycenters}

We next introduce the notion of barycenters in a general metric space $(V,d)$. Let $n\in \bN^*$ and let
$$
\Sigma_n\coloneqq\left\{ (\lambda_1,\cdots,\lambda_n) \in [0,1]^n, \quad \sum_{i=1}^n \lambda_i = 1\right\}
$$
be the set of barycentric weights. For any $\rU_n = (u_i)_{1\leq i\leq n} \in V^n$ and barycentric weights 
$\Lambda_n = (\lambda_i)_{1\leq i\leq n} \in \Sigma_n$, an associated barycenter is an element of $V$ which minimizes 
\begin{equation}
\label{eq:barygen}
\inf_{v\in V} \sum_{i=1}^n \lambda_i d(v,u_i)^2.
\end{equation}
In full generality, minimizers to \eqref{eq:barygen} may not be unique. In the following, we call $\bary(\rU_n, \Lambda_n)$ the set of minimizers to \eqref{eq:barygen}, which is the set of barycenters of $\rU_n$ with barycentric weights $\Lambda_n$. 

It will be useful to introduce the notion of \emph{optimal barycenter} of an element $u\in V$ for a given family $\rU_n \in V^n$. The set of barycenters with respect to $\rU_n$ is
$$
\cB_n(\rU_n) \coloneqq \bigcup_{\Lambda_n\in \Sigma_n} \bary(\rU_n, \Lambda_n)
$$
and an optimal barycenter of the function $u\in V$ with respect to the set $\rU_n$ is a minimizer of
\begin{equation}\label{eq:optbarygen}
\min_{b \in \cB_n(\rU_n)} d(u, b)^2.
\end{equation}
In other words, a minimizer to \eqref{eq:optbarygen} is the projection of $u$ on the set of barycenters $\cB_n(\rU_n)$.

\medskip

We next present some properties of barycenters in the Wasserstein space $(V,d) = (\Pr,W_2)$ which will be relevant for our developments (see \cite{AC2011} for further details). The first property is that problem \eqref{eq:barygen} has a unique solution, that is, for a family of probability measures $\rU_n = (u_i)_{1\leq i\leq n} \in \Pr^n$ and barycentric weights 
$\Lambda_n = (\lambda_i)_{1\leq i\leq n} \in \Sigma_n$, there exists a unique minimizer to
\begin{equation}
\label{eq:bary}
\min_{v\in \Pr} \sum_{i=1}^n \lambda_i W_2(v,u_i)^2,
\end{equation}
which is denoted by $\bary(\rU_n, \Lambda_n)$.
In addition, the barycenter can be easily characterized in terms of its inverse cumulative distribution function since \eqref{eq:bary} implies that
\begin{equation}
\label{eq:bary-icdf}
\icdf{\bary(\rU_n, \Lambda_n)} = \argmin_{f\in L^2([0,1])} \sum_{i=1}^n \lambda_i \|\icdf{u_i} -f\|_{L^2([0,1])}^2,
\end{equation}
which yields
\begin{equation}
\label{eq:bary-icdf-2}
\icdf{\bary(\rU_n, \Lambda_n)} =\sum_{i=1}^n \lambda_i \icdf{u_i}.
\end{equation}

The \emph{optimal barycenter} of a function $u\in\Pr$ for a given set of functions $\rU_n$ is unique. We denote it $b(u,\rU_n)$ and it can be easily characterized in terms of its inverse cumulative distribution function. Indeed, the minimization problem \eqref{eq:optbarygen}reads in this case
$$
\min_{b \in \cB_n(\rU_n)} W_2(u, b)^2
$$
and it has a unique minimizer $b(u, \rU_n)$. An alternative formulation of the optimal barycenter  is by finding first the optimal weights 
\begin{equation}\label{eq:minbary}
\Lambda_n^{\rm opt}  = \argmin_{\Lambda_n \in \Sigma_n}  W^2_2(u, \bary(\rU_n, \Lambda_n)).
\end{equation}
The optimal barycenter is then
$$
b(u,\rU_n) = \bary(\rU_n, \Lambda^{\opt}_n).
$$
Note that for all $\Lambda_n\in \Sigma_n$ and all $w\in \Pr$,
\begin{align*}
 W^2_2(u, \bary(\rU_n, \Lambda_n))& = \left\| \icdf{u} - \sum_{i=1}^n \lambda_i \icdf{u_i}\right\|^2_{L^2([0,1])}\\
 & =  \left\| \Log{w}(u) - \sum_{i=1}^n \lambda_i \Log{w}(u_i)\right\|^2_{L^2([0,1])}
\end{align*}
so the computation of the optimal weights $\Lambda_n^{\opt}$ in problem \eqref{eq:minbary} is a simple convex quadratic optimization problem.

For a given $w\in\Pr$, denoting
$$
\rT_n = \Log{w}(\rU_n) = \{ \Log{w}(u_1),\dots, \Log{w}(u_n) \}
$$
the logarithmic image of $\rU_n$ and $\Conv(\rT_n)$
the convex hull of $\rT_n$, we see that $\Log{w}(b(u,\rU_n))$ is the projection of $\Log{w}(u)$ onto $\Conv(\rT_n)$, namely
\begin{equation}
\label{eq:bary-proj-hull}
\Log{w}(b(u,\rU_n)) = \argmin_{f\in \Conv(\rT_n)} \left\| \Log{w}(u) - f \right\|^2_{L^2([0,1])}.
\end{equation}

\section{Kolmogorov $n$-widths for two simple conservative PDEs}
\label{sec:kolmo}

In the sequel, $\mathcal{M}$ is the set of solutions of a parametric conservative PDE in one dimension. Instead of working in the usual Banach/Hilbert setting, we assume that $\mathcal{M} \subset \mathcal P_2(\Omega)$. We denote
$$
\mathcal{T}\coloneqq \Log{w}(\mathcal M) \subset L^2([0,1]),
$$
the image of $\mathcal M$ by the logarithmic map $\Log{w}$, where $w$ is an element of $\mathcal P_2(\Omega)$ which will be fixed later on.

To illustrate the interest of working with this metric, we show in a pure transport equation and in an inviscid Burgers' equation that the Kolmogorov widths $d_n(\cT, L^2([0,1]))$ and $\delta_n(\cT, L^2([0,1]))$ decay at a faster rate than the widths $d_n(\cM, L^2(\Omega))$ and $\delta_n(\cM, L^2(\Omega)$ of the original set of solutions $\cM$. This shows that it is convenient to transform the orginal data $\cM$ to $\cT$ by the nonlinear logarithmic mapping before performing the dimensionality reduction. Indeed, if $d_n(\cT, L^2([0,1]))$ (or $\delta_n(\cT, L^2([0,1]))$) decay fast, then there exist spaces $V_n\subset L^2([0,1])$ such that
$$
e_{\wc}(\cT, L^2([0,1]), V_n)=\sup_{f\in\cT}\Vert f - P_{V_n}f \Vert_{L^2([0,1])}
$$
decays fast as $n\to\infty$. Thus if for all $f\in\cT$ the projections $P_{V_n}f \in \cO_w$, then their exponential map is well defined and we can approximate any $u\in\cM$ with $\Exp{w}(P_{V_n}\Log{w}(u))$. Due to the isometric properties of $\Exp{w}$ (see corollary \ref{ThExp}), the approximation error of $\cM$ is
$$
e_{\wc}(\cM, W_2, V_n)
\coloneqq
\sup_{u\in\cM} W_2(u, \Exp{w}(P_{V_n}\Log{w}(u))) = e_{\wc}(\cT, L^2([0,1]), V_n).
$$
Thus the existence of linear spaces $(V_n)_{n\geq 1}$ in $L^2([0,1])$ with good approximation properties for $\cT$ automatically yields good low rank nonlinear approximations for $\cM$ (provided that the exponential map is well defined).

\corr{
The main reason why one can expect that the Kolmogorov widths decay faster after a logarithmic transformation in transport-dominated problems is connected to the fact that sets of \emph{translated} functions, say for instance a set of the form 
$\{ u(\cdot -\tau), \quad \tau \in [0,1]\}$ for some $u\in \mathcal P_2(\Omega)$, are not well approximated by low-dimensional linear spaces. On the contrary, the decay of the widths of the same type of set after logarithmic transformation can become dramatically faster as we illustrate in the pure transport problem of Section \ref{sec:neednonlinear}.
}

\subsection{A pure transport problem}
\label{sec:neednonlinear}

We consider here a prototypical example similar to the one given in \cite{BCMN2018} (see also \cite{Welper2015,GK2019} for other examples). 
\corr{Consider the univariate transport equation
$$
\partial_t \rho_y(t,x)+y \partial_x \rho_y(t,x)=0, \quad x\in\bR, \; t\geq 0,
$$
with initial value $\rho_0(x)=\charFun_{]-1,0]}$, and parameter $y\in \rY\coloneqq[0,1]$. Note that this is a conservative problem since for all $t\geq 0$, $\int_\bR u(t,x)\dx=1$. We consider the parametrized family of solutions at time $t=1$ restricted to $x\in \Omega\coloneqq [-1,1]$, that is
$$
\cM = \{ u(z) = \rho_y(t=1, x) dx \coloneqq\charFun_{[y-1,y]}(x) dx\; : \; z=(t,y)\in \{1\}\times [0,1]\}.
$$
}
Note that here $t$ is fixed so it is not a varying parameter. We have kept it in the notation in order to remain consistent with the notation of the introduction.

Since $\cM\subset\cP_2([-1,1])$, we can define
$$
\cT = \left\{\Log{w}(u(z)) ,\, z \in \rZ\right\} = \left\{ \icdf{u(z)} - \icdf{w},\; y \in[0,1] \right\},
$$
where $w$ is chosen as $\charFun_{[y_0-1,y_0]}$ for some $y_0\in [0,1]$. 

In the following theorem, we state two extreme convergence results. 
On the one hand we prove that $d_n(\cT, L^2([0,1]))=0$ for $n>1$. On the other hand, it holds that $d_n(\cM, L^2(\Omega))$ cannot decay faster that the rate $n^{-1/2}$. We state the result in the $L^2$ metric since it is very commonly considered but a similar reasoning would give a rate of $n^{-1}$ in $L^1$ (see, e.g., \cite{Welper2015}). 
This rigorously proves that standard linear methods cannot be competitive for reducing this type of problems and that shifting the problem from $\cM$ to $\cT$ dramatically helps in reducing the complexity in the pure transport case.

\begin{theorem}
 There exists a constant $c>0$ such that for all $n\in \mathbb{N}^*$,
\begin{equation}
\label{eq:kolmoL2trans}
 d_n(\cM,L^2(\Omega)) \geq \delta_n(\cM,L^2(\Omega)) \geq c n^{-1/2}.
\end{equation}
In addition, 
\begin{equation}\label{eq:deux}
 \forall n>1,\quad d_n(\cT, L^2([0,1]))=0.
\end{equation}
\end{theorem}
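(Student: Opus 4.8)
My plan is to handle the two claims by completely different means: \eqref{eq:deux} is an explicit computation, whereas \eqref{eq:kolmoL2trans} is a genuine lower bound on an average width. I would dispatch \eqref{eq:deux} first. Each $u(z)=\charFun_{[y-1,y]}\dx$ is the uniform law on $[y-1,y]$, so $\icdf{u(z)}(s)=s+y-1$ on $[0,1]$; with $w=\charFun_{[y_0-1,y_0]}$ one has $\icdf{w}(s)=s+y_0-1$, and therefore by \eqref{eq:log}
\[
\Log{w}(u(z))(s)=\icdf{u(z)}(s)-\icdf{w}(s)=y-y_0,
\]
which is \emph{constant} in $s$. Hence $\cT=\{(y-y_0)\,\charFun_{[0,1]}:y\in[0,1]\}$ lies in the one-dimensional space $\vspan\{\charFun_{[0,1]}\}$, so $d_n(\cT,L^2([0,1]))=0$ for every $n\ge1$ (the width is non-increasing in $n$), in particular for $n>1$. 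The striking point is that the logarithm linearises the whole translation family onto a single line.

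For \eqref{eq:kolmoL2trans} the inequality $d_n\ge\delta_n$ is free, since the supremum in $e_{\wc}$ dominates the $\mu$-average in $e_{\av}$; everything thus reduces to proving $\delta_n(\cM,L^2(\Omega))\ge c\,n^{-1/2}$, where I take $\mu$ uniform on $[0,1]$. The basic computation is that the symmetric difference of $[y-1,y]$ and $[y'-1,y']$ has measure $2|y-y'|$, whence $\|u(y)-u(y')\|_{L^2(\Omega)}^2=2|y-y'|$. I would exploit this via increments on a uniform grid of spacing $1/N$: the increment $u(y+1/N)-u(y)$ equals $R-L$ with $R=\charFun_{[y,y+1/N]}\subset[0,1]$ and $L=\charFun_{[y-1,y-1+1/N]}\subset[-1,0]$, and over the whole grid these bumps have pairwise disjoint supports. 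Consequently the successive increments are mutually orthogonal, each of squared norm $2/N$.

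The engine of the lower bound is the elementary fact that $n$ directions cannot capture $m$ orthonormal vectors: for orthonormal $f_1,\dots,f_m$ and any $V_n$ with $\dim V_n=n$, one has $\sum_i\operatorname{dist}(f_i,V_n)^2\ge m-n$. Applying this to the normalised increments bounds $\sum_k\|(I-P)(u_k-u_{k-1})\|^2$ from below by $\tfrac{2}{N}(m-n)$, with $m$ the number of increments and $P=P_{V_n}$; a telescoping inequality $\sum_k\|a_k\|^2\ge\tfrac14\sum_k\|a_k-a_{k-1}\|^2$ with $a_k=(I-P)u_k$ then converts this into a lower bound on $\sum_k\operatorname{dist}(u_k,V_n)^2$ at the grid points, valid for every $V_n$.

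The step I expect to be the real obstacle is passing from this discrete sum to the continuous average $\int_0^1\operatorname{dist}(u(y),V_n)^2\,dy$: the distances are of order $n^{-1/2}$ while $y\mapsto u(y)$ oscillates at the larger scale $N^{-1/2}$, so a naive pointwise comparison between $u(y)$ and the nearest grid point is too lossy. I would instead average over a grid offset $\theta\in[0,1/N)$ through the identity $\int_0^1 F(y)\,dy=\int_0^{1/N}\sum_k F(\theta+k/N)\,d\theta$: the discrete bound holds verbatim for each offset and integrates exactly to the continuous one. Taking $N\sim 2n$ and optimising then gives $\delta_n^2\ge c^2/n$, hence $\delta_n\ge c\,n^{-1/2}$. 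The only bookkeeping is to keep every grid point inside $[0,1]$, so that each $[y-1,y]\subset\Omega=[-1,1]$ and the orthogonality of the increments is exact.
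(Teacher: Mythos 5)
Your proof of \eqref{eq:deux} is exactly the paper's: you compute $\icdf{u(z)}(s)=s+y-1$, observe that $\Log{w}(u(z))$ is the constant $y-y_0$, and conclude that $\cT$ sits in the one-dimensional span of $\charFun_{[0,1]}$. For \eqref{eq:kolmoL2trans}, however, you take a genuinely different route. The paper (Appendix A) writes $\delta_n(\cM,L^2(\Omega))^2=\sum_{k>n}\sigma_k$ for the eigenvalues $\sigma_k$ of the correlation operator with kernel $\kappa(x,x')=\int_0^1 u(y)(x)u(y)(x')\,dy$, computes $\kappa$ explicitly, bounds its Fourier coefficients from below by $C(|k|+|k'|)^{-2}$, and uses the Courant--Fischer max--min principle on the span of the first $2n+1$ Fourier modes to get $\sigma_n\geq c'n^{-2}$, hence $\delta_n\geq cn^{-1/2}$. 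You instead work directly with an arbitrary $n$-dimensional $V_n$: the increments of $y\mapsto\charFun_{[y-1,y]}$ along a grid of spacing $1/N$ are mutually orthogonal with squared norm $2/N$, the trace bound $\sum_i\|P_{V_n}f_i\|^2\leq n$ for orthonormal $f_i$ gives $\sum_k\|(I-P_{V_n})\Delta_k\|^2\geq\frac{2}{N}(m-n)$, the telescoping inequality $\sum_k\|a_k-a_{k-1}\|^2\leq 4\sum_k\|a_k\|^2$ transfers this to the distances at the grid points, and averaging over the grid offset $\theta\in[0,1/N)$ recovers the continuous integral exactly; choosing $N\sim 2n$ yields $\delta_n^2\gtrsim n^{-1}$. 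I checked the pieces (disjointness of the left and right bumps inside $\Omega=[-1,1]$, the $m-n$ lower bound, the factor $1/4$, the offset identity) and the argument is sound. What each approach buys: the paper's spectral route gives the exact identity $\delta_n^2=\sum_{k>n}\sigma_k$ and a per-eigenvalue lower bound, but it hinges on an explicit computation of the kernel and its Fourier coefficients; your orthogonal-increments argument is more elementary (no spectral theory, no Fourier analysis of $\kappa$) and more portable to other translation families where the correlation kernel is not so tractable, at the cost of less sharp constants and no spectral information.
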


\begin{proof}
\corr{Bound \eqref{eq:kolmoL2trans} was first proved in \cite{OR2016}. We provide an alternative proof in Appendix \ref{appendix:proof}.}
\corr{
Let us prove that $d_n(\cT, L^2([0,1]))=0$ for $n>1$. Since for every $y \in [0,1]$, $\charFun_{[y-1,y]} = \charFun_{[y_0-1,y_0]}(x - y + y_0)$, using \eqref{eq:simple}, it holds that
$$
\operatorname{icdf}_{u_z}(s)  = \operatorname{icdf}_{w} - y_0 + y,\quad \forall s \in [0,1],
$$
and for all $s\in [0,1]$,
$$
\Log{w} (u_z)(s)
=
\operatorname{icdf}_{u_z}^{-1}(s) 
-
\operatorname{icdf}_{w}^{-1}(s)
= y - y_0.
$$
}
As a consequence, the set $\cT$ is contained in the one-dimensional space of constant functions defined on $(0,1)$ and $d_n(\cT, L^2([0,1]))=0$ for all $n > 1$.
\end{proof}

\subsection{An inviscid Burgers' equation}\label{sec:inviscid}
In this section, we consider a simple inviscid Burgers' equation. We study a simple though representative example where \corr{we prove a priori estimates on the decay of the Kolmogorov $n$-width 
of $\mathcal{T}$ as $n$ goes to infinity. We are not able to prove that the Kolmogorov $n$-width of the set $\mathcal M$ decays slower than the one of the set $\mathcal T$, but we present in Section~\ref{sec:burgers-num} numerical tests on this particular example
which indicate that this is indeed the case (see Figure~\ref{fig:decay-svd-Burgers}).}

\medskip

Let $\rY=[1/2,3]$, and for all $y\in \rY$, we consider the inviscid Burgers' equation for $(t,x)\in[0,T]\times \Omega=[0,5]\times [-1,4]$,
\begin{align}
\label{eq:burgers}
\partial_t \rho_y + \frac 1 2 \partial_x (\rho_y^2) =0,
\quad
\rho_y(t=0,x) =
\begin{cases}
&0,\quad -1\leq x <0 \\
&y,\quad 0\leq x < \frac 1 y \\
&0,\quad  \frac 1 y \leq x \leq 4,
\end{cases}
\end{align}
with periodic boundary conditions on $\Omega$. \corr{For every $t\in[0,T]$, the solution $\rho_y(t)$ is the density of an absolutely continuous measure $u_y(t) \in \Pr$.}

Problem \eqref{eq:burgers} has a unique entropic solution which reads as follows. For $0<t<2/y$, a wave composed of a shock front and a rarefaction wave propagates from left to right and
\begin{align}
\rho_y(t,x) =
\begin{cases}
0,&\quad -1\leq x <0 \\
\frac x t,&\quad 0\leq x<yt \\
y,&\quad yt\leq x\leq \frac 1 y +\frac{yt}{2},\qquad 0<t<2/y^2. \\
0,&\quad \frac 1 y +\frac{yt}{2} < x \leq 4
\end{cases}
\end{align}
The rarefaction wave reaches the front at $t=2/y^2$ so that for $t\geq 2/y^2$,
\begin{align}
\rho_y(t,x) =
\begin{cases}
0,&\quad -1\leq x <0 \\
\frac x t,&\quad 0\leq x\leq \sqrt{2t},\qquad \forall t\geq 2/y^2 \\
0,&\quad x>\sqrt{2t}
\end{cases}
\end{align}
Let us denote $u_{y,t}(z) := \rho_y(t,\cdot)dx$ the measure associated with $\rho_y$.
The cumulative distribution function $\cdf{u_{y,t}}:\Omega\to [0,1]$ of $u_{y,t}$ is equal to
\begin{align}
\cdf{u_{y,t}} (x) =
\begin{cases}
0,&\quad -1\leq x <0 \\
\frac{x^2}{2t},&\quad 0\leq x<yt \\
yx - \frac{y^2t}{2},&\quad yt\leq x\leq \frac 1 y +\frac{yt}{2},\qquad 0<t<2/y^2, \\
1,&\quad \frac 1 y +\frac{yt}{2} < x \leq 4
\end{cases}
\end{align}
and
\begin{align}
\cdf{u_{y,t}} (x) =
\begin{cases}
0,&\quad -1\leq x <0 \\
\frac{x^2}{2t},&\quad 0\leq x\leq \sqrt{2t},\qquad \forall t\geq 2/y^2. \\
1,&\quad x>\sqrt{2t}
\end{cases}
\end{align}
The generalized inverse $\icdf{u_{y,t}}: [0,1] \to \Omega$ has also an explicit expression which reads as
\begin{align}\label{EqExample}
\icdf{u_{y,t}} (s) =
\begin{cases}
-1 &\quad s =0 \\
\sqrt{2ts},&\quad 0< s < \frac{y^2t}{2} \\
\frac{1}{y}\( s + \frac{y^2t}{2} \),&\quad \frac{y^2t}{2}\leq s \leq 1,\qquad 0<t<2/y^2 \\
\end{cases}
\end{align}
and
\begin{align}\label{EqExample2}
\icdf{u_{y,t}} (s) =
\begin{cases}
-1 &\quad s =0 \\
\sqrt{2ts},&\quad 0< s \leq 1,\qquad \forall t\geq 2/y^2.
\end{cases}
\end{align}
We can easily check that the set of solutions
$$
\cM = \{ u_z:= u_{y,t} \; :\; z=(y,t)\in  Z=[0,T]\times \rY \}
$$
is a subset of $\Pr$. As before, we introduce the set $\cT \coloneqq \{ \Log{w}(u) \; : \; u \in \cM \}$, where $w\in \mathcal P_2(\Omega)$, and prove the following result.

\begin{lemma}
 There exists a constant $C>0$ such that for all $n\in \mathbb{N}^*$,
 \begin{equation}\label{eq:res}
 d_n(\mathcal T, L^2(0,1)) \leq C \corr{n^{-21/10}}.
 \end{equation}
\end{lemma}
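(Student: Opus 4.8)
The plan is to construct a single explicit sequence of finite-dimensional subspaces of $L^2([0,1])$ that approximates the whole family, exploiting an algebraic simplification of $\icdf{u_{y,t}}$. First I would dispose of the additive constant: since $\cT = \{\icdf{u_{y,t}} - \icdf{w}\}$ differs from the family of inverse c.d.f.'s only by the fixed function $\icdf{w}\in L^2([0,1])$ (which is finite because $w\in\Pr$), it suffices to approximate $\{\icdf{u_{y,t}}\}$ by spaces that contain $\icdf{w}$; this costs one extra dimension and leaves the rate unchanged. Next I would split the parameter domain according to whether the rarefaction has caught the shock. On $Z_A = \{(y,t): t\ge 2/y^2\}$, formula \eqref{EqExample2} gives $\icdf{u_{y,t}}(s) = \sqrt{2t}\,\sqrt s$, so this entire part of the family lies in the one-dimensional space $\vspan\{\sqrt s\}$ and is captured exactly.

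The heart of the matter is $Z_B = \{(y,t): 0<t<2/y^2\}$. Here I would re-parametrize by the break-point $s_0 \coloneqq y^2 t/2 \in [0,1)$ in place of $t$. Substituting $t = 2s_0/y^2$ into \eqref{EqExample} yields, after simplification, the factorized form $\icdf{u_{y,t}}(s) = \tfrac1y G_{s_0}(s)$, where $G_{s_0}(s) = 2\sqrt{s_0 s}$ on $[0,s_0]$ and $G_{s_0}(s) = s+s_0$ on $[s_0,1]$. The key observation, which I expect to be the crux of the proof, is the identity
\begin{equation*}
G_{s_0}(s) = 2\sqrt{s_0}\,\sqrt s + \bigl(\sqrt s - \sqrt{s_0}\bigr)_+^2 ,
\end{equation*}
valid on all of $[0,1]$. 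This decouples the family into a rank-one part $2\sqrt{s_0}\,\sqrt s \in \vspan\{\sqrt s\}$, which carries the fixed $\sqrt s$-singularity at the origin, and a remainder $(\sqrt s - \sqrt{s_0})_+^2$ whose only singularity is a \emph{moving} $C^1$ kink (a jump in the second derivative) at $s_0$. The prefactor $1/y\in[1/3,2]$ is a bounded scalar and, the spaces being linear, only inflates the error by a constant.

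It then remains to bound the width of $\{(\sqrt s - \sqrt{s_0})_+^2 : s_0 \in [0,1)\}$. I would perform the change of variable $\sigma = \sqrt s$, which turns this into the family of moving quadratic kinks $\phi_{\sigma_0}(\sigma) = (\sigma - \sigma_0)_+^2$, $\sigma_0 \in [0,1)$, measured in $L^2([0,1],\,2\sigma\,d\sigma)$; the weight $2\sigma$ is bounded on $[0,1]$ and does not affect the rate. As approximation space I would take (the $\sqrt s$-image of) discontinuous piecewise polynomials of degree $2$ on a uniform mesh of $m$ intervals of length $h = 1/m$. For any $\sigma_0$, each $\phi_{\sigma_0}$ is represented exactly on every interval not containing $\sigma_0$ (there it is either $0$ or the single quadratic $(\sigma-\sigma_0)^2$), so the error comes solely from the one interval holding $\sigma_0$. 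A scaling argument gives a uniform $L^\infty$ error $\le \kappa h^2$ on that interval, hence an $L^2$ error $\le \kappa' h^{5/2} = \kappa' m^{-5/2}$.

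The total space, $\vspan\{\sqrt s,\, \icdf{w}\}$ together with the spline image, has dimension $N \sim m$, whence $d_N(\cT, L^2([0,1])) \lesssim N^{-5/2}$. Since $5/2 > 21/10$ this implies \eqref{eq:res}, and adjusting the constant $C$ covers the finitely many small $n$, for which the family is trivially bounded in $L^2$. The main obstacle is purely conceptual, namely spotting the identity above together with the substitution $\sigma=\sqrt s$, which simultaneously linearizes the origin singularity and converts the moving break-point into a clean polynomial kink; the remaining spline estimates are then routine, and care is only needed to check that the change-of-variable weight and the scalar $1/y$ do not degrade the rate.
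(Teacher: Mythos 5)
Your proof is correct, and it takes a genuinely different route from the paper's. The paper also exploits that each $\icdf{u_{y,t}}$ is exactly a combination of $\{1,s,\sqrt s\}$ on every mesh cell not containing the break-point $s_0=y^2t/2$, but it works directly in the $s$ variable, where the local error near the break-point degenerates like $1/(y^3t)$ as $t\to 0$; to compensate it uses a graded mesh ($n$ cells of length $n^{-\beta}$ near the origin, $n$ cells of length $1/n$ elsewhere) and balances a crude $\eps^{3/2}$ Lipschitz estimate against a finer $\eps^{5/2}$ Taylor estimate, which forces $\beta=7/5$ and yields exactly the exponent $21/10$. Your decomposition $G_{s_0}(s)=2\sqrt{s_0}\,\sqrt s+(\sqrt s-\sqrt{s_0})_+^2$ together with the substitution $\sigma=\sqrt s$ removes this degeneration entirely: the $t$- and $y$-dependence is absorbed into a bounded scalar and the single global mode $\sqrt s$, and the remainder becomes the uniformly $C^{1,1}$ family $(\sigma-\sigma_0)_+^2$, so a \emph{uniform} mesh of piecewise quadratics already gives $O(m^{-5/2})$ with $O(m)$ dimensions (the bounded weight $2\sigma\,d\sigma$ and the prefactor $1/y\in[1/3,2]$ only affect constants, and the one extra dimension for $\icdf{w}$ handles the shift from $\widetilde\cT$ to $\cT$). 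The net effect is that your argument is cleaner and proves the strictly stronger rate $d_n(\cT,L^2(0,1))\lesssim n^{-5/2}$, which implies the stated $n^{-21/10}$ bound a fortiori; what the paper's graded-mesh argument buys in exchange is that it does not rely on the special algebraic factorization of this particular solution family, so it is somewhat more robust as a template for other flux functions.
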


\begin{proof}
Let us define
$$
\widetilde{\cT}\coloneqq  \{ \icdf{u} \; : \; u \in \cM \} =  \{ \icdf{u(z)} \; : \; z \in \rZ \}. 
$$
Since $\cT$ is a shift by $\icdf{w}$ of $\widetilde \cT$, we have
\corr{
$$
 d_{n+1}(\widetilde \cT, L^2([0,1]))  \leq d_{n}(\cT, L^2([0,1])) \leq 2  d_{n}(\widetilde \cT, L^2([0,1]))
$$
where the second inequality holds thanks to the fact that $\icdf{w}$ is taken in the convex hull of $\cT$.}
Thus, to prove \eqref{eq:res}, it is enough to prove that there exists $C>0$ such that for all $n\in \mathbb{N}^*$, 
\begin{equation}\label{eq:dist}
d_{n}(\widetilde{\mathcal{T}}, L^2(0,1)) \leq C \corr{n^{-21/10}}. 
\end{equation}
Let us prove \eqref{eq:dist}. \corr{To this aim, let us first point out that, for all $(y,t)\in$, the function $g_{y,t}: [0,1] \to \Omega$ defined by
\begin{align}\label{EqExampleg}
g_{y,t} (s) =
\begin{cases}
0 &\quad s =0 \\
\sqrt{2ts},&\quad 0< s < \frac{y^2t}{2} \\
\frac{1}{y}\( s + \frac{y^2t}{2} \),&\quad \frac{y^2t}{2}\leq s \leq 1,\\
\end{cases}
\quad \qquad\mbox{ if } 0<t<2/y^2 ,
\end{align}
and
\begin{align}\label{EqExample2g}
g_{y,t} (s) =
\begin{cases}
0 &\quad s =0 \\
\sqrt{2ts},&\quad 0 < s \leq 1,
\end{cases}
\quad \qquad\mbox{ if } t\geq 2/y^2
\end{align}
are equal almost everywhere to $\icdf{u_{y,t}}$, so that $\icdf{u_{y,t}} = g_{y,t}$ in $L^2([0,1])$.}

\corr{
It is easy to see that $g_{y,t}$ is continuous and that for all $s\in (0,1)$,
\begin{align}\label{EqExampleg}
g'_{y,t} (s) =
\begin{cases}
\sqrt{\frac{t}{2s}},&\quad 0< s < \frac{y^2t}{2} \\
\frac{1}{y},&\quad \frac{y^2t}{2}\leq s \leq 1,\\
\end{cases}
\quad \qquad\mbox{ if } 0<t<2/y^2 ,
\end{align}
and
\begin{align}\label{EqExample2g}
g_{y,t}' (s) =
\begin{cases}
0 &\quad s =0 \\
\sqrt{\frac{t}{2s}},&\quad 0< s \leq 1,
\end{cases}
\quad \qquad\mbox{ if } t\geq 2/y^2\,.
\end{align}
}

\corr{
Let $s_0 \in (0,1)$
and $\eps>0$ be such that 
$I_0\coloneqq[s_0 - \eps/2, s_0 + \eps/2] \subset [0,1]$. We also denote
$$
\forall s\in I_0, \quad  f_1(s) :=1, \quad
 f_2(s) := s,\quad
 f_3(s) := \sqrt{s},
$$
and define $W(I_0):= \mbox{\rm Span}\left\{ f_1, f_2, f_3\right\} \subset L^2(I_0)$. } 

\corr{
Now, let us consider $z\coloneqq(y,t)\in \rZ$ such that  $\frac{y^2t}{2} \in [s_0 - \eps/2, s_0 + \eps/2]$. This implies in particular that $0<t\leq 2/y^2$. 
We denote by $P_{W(I_0)}$ the orthogonal projection from $L^2(I_0)$ onto $W(I_0)$. }

\corr{
We begin by proving two different auxiliary inequalities. On the one hand, it holds that
$$
\left\| g_z - P_{W(I_0)}(g_z|_{I_0})\right\|_{L^2(I_0)} \leq \left\| g_z - \sqrt{2t}f_3 - \frac{1}{y}f_2 + \frac{yt}{2}f_1\right\|_{L^2(I_0)} = \left\| h_z\right\|_{L^2(I_0)},
$$
where $h_z:= g_z - \sqrt{2t}f_3 - \frac{1}{y}f_2 + \frac{yt}{2}f_1$. It then holds that $h_z\left(\frac{y^2t}{2}\right) = 0$ and
\begin{align}\label{EqExampleg}
h'_{z} (s) =
\begin{cases}
- \frac{1}{y},&\quad s_0-\eps/2 \leq s < \frac{y^2t}{2} \\
-\sqrt{\frac{t}{2s}} ,&\quad \frac{y^2t}{2}\leq s \leq s_0+\eps/2.\\
\end{cases}
\end{align}
}

\corr{The function $h_z$ is thus Lipschitz and its Lipschitz constant on the interval $I_0$ is bounded by $\frac{1}{y}$. Since $y \in [1/2,3]$, we obtain that
\begin{equation}\label{EqSimpleEstimate}
\sup_{s\in I_0} | h_z(s) | \leq 2\eps.
\end{equation}}
This implies that
\corr{
\begin{equation}\label{eq:est10}
\|g_z -  P_{W(I_0)}(g_z|_{I_0})\|_{L^2(I_0)}^2 \leq \left\| h_z\right\|_{L^2(I_0)}^2 \leq 4\eps^3,
\end{equation}
so that 
\begin{equation}\label{eq:est1}
\| g_z -P_{W(I_0)}(g_z|_{I_0}) \|_{L^2(I_0)}\leq 2\eps^{3/2}.
\end{equation}
}
\corr{On the other hand,} we make use of the following \corr{Taylor's} inequality: for all maps $f: [0,1] \to \bR$ whose \corr{derivative} is $M-$Lipschitz and all $s,s'\in[0,1]$,
\begin{equation}\label{EqLipschitzEstimate}
|f(s') - f(s) - f'(s) (s'-s) |\leq \frac{M}{2} |s-s' |^2.
\end{equation}
\corr{
Let us define by $j_z:= g_z - \sqrt{2t}f_3$. Then, it holds that $j_z(s) = 0$ if $s_0 - \eps/2\leq s \leq \frac{y^2t}{2}$ and 
\begin{align}\label{EqExamplej}
j'_{z} (s) = \frac{1}{y} -\sqrt{\frac{t}{2s}} ,\quad  \mbox{ for } \frac{y^2t}{2}\leq s \leq s_0 + \eps/2.
\end{align}
As a consequence, $j_z'$ is continuous on $I_0$, differentiable on $I_0 \setminus\left\{ \frac{y^2t}{2}\right\}$ and
\begin{align}\label{EqExamplej2}
j''_{z} (s) =\frac{1}{2\sqrt{2}}\sqrt{t}  s^{-3/2},\quad  \mbox{ for }\frac{y^2t}{2} < s \leq s_0+\eps/2.
\end{align}}
  \corr{Thus, if $t>0$, the Lipschitz constant of $j_z'$ is bounded by $\frac{1}{y^3t}$ on the interval $I_0$.}

\corr{We then apply the inequality \eqref{EqLipschitzEstimate} at the point $s' = \frac{y^2t}{2}$ to obtain that for all $s\in I_0$, 
$$
|j_z(s)| \leq \eps^2 \frac{1}{2 y^3t}.
$$
Therefore, we get the bound
\begin{equation}\label{eq:est20}
\| g_z - P_{W(I_0)}(g_z|_{I_0}) \|^2_{L^2(I_0)} \leq \| j_z \|^2_{L^2(I_0)} \leq  \frac{1}{4y^6t^2} \eps^{5},
\end{equation}
so that
\begin{equation}\label{eq:est2}
\| g_z - P_{W(I_0)}(g_z|_{I_0}) \|_{L^2(I_0)} \leq  \frac{1}{2y^3t} \eps^{5/2}.
\end{equation}}

\medskip

We are now in position to prove \eqref{eq:dist}. Let $\beta >1$ be a constant whose value will be fixed later on. Let $n\in \mathbb{N}^*$ and let us \corr{consider a partition 
of the interval $[0,1]$ into $2n$ intervals $(I_k)_{1\leq k \leq 2n}$ so that the $n$ first intervals (closest to the point $s=0$) are of length $\frac{1}{n^{\beta}}$ and the other $n$ intervals are of length at most 
$\frac{1}{n}$. 
More precisely, we define} for $1\leq k \leq n$, 
$$
x_k\coloneqq \frac{1}{2n^\beta} + (k-1)\frac{1}{n^\beta} \quad \mbox{ and } \quad I_k\coloneqq \left[x_k - \frac{1}{2n^\beta}, x_k + \frac{1}{2n^\beta}\right).
$$
Besides, for all $n+1 \leq k \leq 2n$, we define 
$$
x_k \coloneqq \frac{n}{n^{\beta}} + \frac{1}{2n} + (k-n-1)\frac{1}{n} \quad \mbox{ and } I_k\coloneqq \left[\min\left(1,x_k - \frac{1}{2n}\right),\min\left(1, x_k +\frac{1}{2n}\right)\right).
$$

We then define the space $V_n \subset L^2(0,1)$ as follows:
$$
V_n\coloneqq \mbox{\rm Span}\left\{\charFun_{I_k}(s),\; \charFun_{I_k}(s)s, \; \charFun_{I_k}(s) \sqrt{s}, \; 1\leq k \leq 2n \right\}.  
$$
In other words, $V_n$ is composed of the functions which, on each interval $I_k$, is a linear combination of an affine function and the square root function. The dimension of the space $V_n$ is at most $6n$. We denote by $P_{V_n}$ the orthogonal 
projection \corr{from} $L^2(0,1)$ onto $V_n$. 

It holds that for all $z\coloneqq(y,t)\in\rZ$, 
$$
\left\|\icdf{u(z)} - P_{V_n}(\icdf{u(z)}) \right\|_{L^2(0,1)} \corr{= \left\|g_z - P_{V_n}(g_z) \right\|_{L^2(0,1)}  = \left\|P_{W(I_{k_0})}(g_z|_{I_{k_0}}) - g_z \right\|_{L^2(I_{k_0})}}
$$
where \corr{$1\leq k_0\leq 2n$ is the unique integer between $1$ and $2n$ such that $\frac{y^2t}{2}\in I_{k_0}$.}
On the one hand, if $1\leq k_0 \leq n$, we make use of inequality \eqref{eq:est1} to obtain that 
$$
\corr{\left\|g_z - P_{W(I_{k_0})}(g_z|_{I_{k_0}}) \right\|_{L^2(I_{k_0})} \leq  2 n^{-3\beta/2}.}
$$
On the other hand, if $n+1\leq k_0 \leq 2n$, necessarily $\frac{y^2t}{2} \geq \frac{n}{n^\beta}$, so that $\corr{y^3t\geq 2y n^{1-\beta}}$. We then make use of inequality \eqref{eq:est2} to obtain that 
$$
\corr{\left\|g_z - P_{W(I_{k_0})}(g_z|_{I_{k_0}})\right\|_{L^2(I_{k_0})} \leq   \frac{1}{4y}  n^{-5/2} n^{\beta -1} \leq \frac{1}{2}  n^{-7/2 + \beta}.}
$$
Choosing now $\beta$ such that $\corr{-3\beta/2 = -7/2 + \beta}$, i.e. $\corr{\beta  = \frac{7}{5}}$, we obtain that for all $n\in \mathbb{N}^*$ and $z\in \rZ$, 
$$
\corr{\left\|\icdf{u(z)} - P_{V_n}(\icdf{u(z)}) \right\|_{L^2(0,1)} = \left\|g_z - P_{V_n}(g_z) \right\|_{L^2(0,1)} \leq 2 n^{-21/10}.}
$$
This implies \eqref{eq:dist} and hence the desired result.
\end{proof}

\section{Algorithms for nonlinear reduced modelling in metric spaces}
\label{sec:algo}

In this section, we introduce two methods for building in practice nonlinear reduced models for a set of solutions $\cM$ on general metric spaces. The methods involve a discrete training set $\rZ_{\rm tr}\subset \rZ$ of $N\in \mathbb{N}^*$ parameters and associated $N$ snapshot solutions $\cM_{\rm tr}\subset \cM$. Similarly as before, we denote
$$
\mathcal{T}_{\tr}\coloneqq \Log{w}(\mathcal M_{\tr}) \subset \cT
$$
the image of $\cM_{\tr}$ by the logarithmic map $\Log{w}$, where $w$ is an element of $\mathcal P_2(\Omega)$ to be fixed in a moment. Since our numerical tests are for one-dimensional conservative PDEs in $W_2$, we also instantiate them in this setting.

\subsection{Tangent PCA (tPCA): offline stage}\label{sec:tPCA}
Our first method is based on the so-called Tangent PCA (tPCA) method \corr{(\cite{Fletcher2004PGA,Sommer2014})}. Its definition requires that the metric space $(V,d)$ is embedded with a Riemannian structure.
The tPCA method consists in mapping the manifold to a tangent space and performing a standard PCA on this linearization.

In the offline phase of the tPCA method, we first fix a particular element $w\in\cM$, usually the Fréchet mean
$$
\min_{\tilde w \in V} \frac 1 N \sum_{u\in\cM_{\tr}} d(u, \tilde w)^2,
$$
and then define an inner product $g_w$ on its tangent space $T_wV$. We next consider the PCA (or POD decomposition) on $T_w V$ of the set
$$
\mathcal{T}_{\tr}\coloneqq \Log{w}(\mathcal M_{\tr}),
$$
with respect to the inner product $g_w$. For every $f\in T_wV$, its norm is denoted by $\Vert f \Vert_w = g^{1/2}_w(f,f)$.
There exists an orthonormal family of functions $(f_k)_{k\geq 1}^N$ of $T_w V$ and an orthogonal family 
 $(c_k)_{k\geq 1}$ of functions of $\ell^2(\rZ_\tr)$ norm such that
$$
\Log{w}(u(z))= \sum_{k=1}^N  c_k(z) f_k, \quad \forall z\in {\rZ}_{\rm tr}.
$$
The $k^{th}$ singular value is defined as $\sigma_k\coloneqq \|c_k\|_{\ell^2(\rZ_\tr)}$ and we arrange the indices so that $(\sigma_k)_{k\geq 1}$ is a non-increasing sequence.

In the online phase, we fix $n\in \mathbb{N}^*$ and define $V_n\coloneqq \rm{Span}(f_1, \cdots, f_n)$ and $P_{V_n}$ the orthogonal projection on $T_w V$ with respect to the scalar product $g_w$. For a given $z\in Z$ for which we want to approximate $u(z)$, we consider two possible versions:
\begin{itemize}
\item  \textbf{Projection:} We compute
\begin{equation}\label{eq:deffn}
f_n^{\rm proj}(z)\coloneqq P_{V_n}\Log{w}(u(z)) = \sum_{k=1}^n  c_k(z) f_k,
\end{equation}
and approximate $u(z)$ as 
\begin{equation}
\label{eq:deftPCA}
u_n(z)^{\rm tPCA, proj}\coloneqq \Exp{w}(f_n^{\rm proj}(z)).
\end{equation}
Note that, in fact, the projection \eqref{eq:deffn} cannot be computed online since the computation of the coefficients $c_k(z)$ requires the knowledge of $\Log{w}(u(z))$ (and thus of the snapshot $u(z)$ itself). This motivates the following strategy based on interpolation of the $c_k(z)$.
\item \textbf{Interpolation:} 
Among the possible ways to to make the method applicable online, we propose to compute an interpolation $\overline{c}_k: \rZ \to \mathbb{R}$ such that 
 $$
\overline{c}_k(z) = c_k(z),\quad \forall z\in \rZ_\tr,\, 1\leq k \leq n.
 $$

In the numerical experiments, to avoid stability problems,
we restrict the interpolation to neighboring parameters of the target parameter $z$ which belong to the training set $\rZ_\tr$
Specifically, for a given fixed tolerance $\tau>0$, we find the set $\cN_{\tau}(z, \rZ_\tr)$ of parameters of $\rZ_\tr$ whose $\ell^2$-distance to the current $z$ is at most $\tau$, that is,
$$
\cN_{\tau}(z, \rZ_\tr) \coloneqq \{  \tilde z \in \rZ_\tr\: :\; || z - \tilde z ||_{\ell^2(\bR^p)} \leq \tau \}.
$$
Then, for $k=1,\dots,n$, we build a local interpolator $\bar c^{z,\tau}_k$ that satisfies the local interpolating conditions\footnote{\corr{In our code, we use a local multiquadric radial basis interpolator.}}
$$
\bar c^{z,\tau}_k (z) = c_k (z),\quad \forall z \in \cN_{\tau}(z, \rZ_\tr).
$$
With the local interpolators $\bar c^{z,\tau}_k$, we now compute
\begin{equation}
\label{eq:deffninterp}
f_n^{\rm interp}(z)\coloneqq \sum_{k=1}^n  \bar c^{z,\tau}_k(z) f_k.
\end{equation}
which is an online computable approximation of $\Log{w}(u(z))$. 
Finally, we approximate $u(z)$ with

\begin{equation}
\label{eq:deftPCAinterp}
u_n(z)^{\rm tPCA, interp}\coloneqq \Exp{w}(f_n^{\rm interp}(z)).
\end{equation}
\end{itemize}
Before continuing, several comments are in order:
\begin{itemize}
\item In the online phase, it is necessary to compute exponential maps. However, in full generality, their computation may be expensive since it may require to solve a problem in the full space and not only in a reduced space. This important issue is mitigated in our numerical examples because the exponential map in $W_2$  in one space dimension has an explicit expression. The development of efficient surrogates for the exponential mapping is a topic by itself which we will address in future works.
\item \corr{The approach involving the interpolation of the coefficients is purely \emph{data-driven} in the sense that the online phase does not require solving the original PDE in any reduced space/manifold. This is in contrast to traditional projection-based reduction approaches.}
\item Note that $u^{\rm tPCA, proj}_n(z)$ or $u^{\rm tPCA, interp}_n(z)$ are not always properly defined through \eqref{eq:deftPCA} or \eqref{eq:deftPCAinterp} since it is required that $f^{\rm proj}_n(z)$ or $f^{\rm interp}_n(z)$ belong to $\mathcal{O}_w$, the definition domain of the map $\Log{w}$. Since there is a priori no guarantee that this is always the case, the approach does not lead to a fully robust numerical method and is prone to numerical instabilities as we illustrate in our numerical examples. This drawback has been one main motivation to develop the method based on barycenters, which will be stable by construction. We emphasize nonetheless that the tPCA has important optimality properties in terms of its decay of the approximation error as we describe next. This is probably the reason why numerical methods based on tPCA have drawn significant interest in numerous fields like pattern recognition, shape analysis, medical imaging, computer vision \cite{Fletcher2004PGA,Sommer2014}, and, more recently, statistics \cite{HuckemannPCA} and machine learning to study families of histograms and probability densities \cite{CSBCP2017}. We show evidence that this method also carries potential for model reduction of transport dominated systems in Section~\ref{sec:num}. 
\end{itemize}

\paragraph*{\corr{Case of the 1d Wasserstein space:}}
\corr{When} $(V,d) = (\cP_2(\Omega), W_2)$, \corr{recalling formulas \eqref{eq:exp} and \eqref{eq:log} for the Exp and Log maps and formula \eqref{EqFlatReduction} for the $W_2$ distance,} the offline phase of the tPCA method consists in performing the following steps:
\begin{itemize}
 \item Compute $\overline{f}\coloneqq \frac{1}{N}\sum_{u \in \cM_\tr} \icdf{u}$ and $w = \operatorname{icdf}^{-1}\left( \overline{f}\right)$.
 \item Compute 
 $$
 \mathcal T_{\rm tr}\coloneqq \left\{ \icdf{u} - \icdf{w} \,:\, u\in \cM_{\rm tr} \right\} \subset L^2([0,1]). 
 $$
 \item Compute the $n$ first modes of the PCA of the discrete set of functions $\mathcal T_{\rm tr}$ in $L^2([0,1])$ and denote them by $f_1, \cdots, f_n$; 
 \item 
 Similarly as before, there exists an orthonormal family of functions $(f_k)_{k\geq 1}^N$ of $L^2([0,1])$ and an orthogonal family 
 $(c_k)_{k\geq 1}$ of functions of $\ell^2(\rZ_\tr)$ with non-increasing $\ell^2$ norm such that for all $z\in {\rZ}_{\rm tr}$,
$$
\Log{w}(u(z))= \sum_{k=1}^N  c_k(z) f_k, \quad \forall z\in {\rZ}_{\rm tr}.
$$
The $k^{th}$ singular value is defined as $\sigma_k\coloneqq \|c_k\|_{\ell^2(\rZ_\tr)}$ and
$$
c_k(z)\coloneqq \langle \icdf{u(z)} - \icdf{w}, f_k \rangle_{L^2(0,1)}.
$$
For the online phase, if we work with a dimension $n\in\bN^*$ for the reduced space, we store the coefficients $c_k(z)$ for all $z\in\rZ_\tr$ and $1\leq k \leq n$.
 
\end{itemize}
In the online stage, we fix a dimension $n\in\bN^*$ and, for a given target $z\in \rZ$ for which we want to approximate $u(z)$, we perform the following steps:
\begin{itemize}
 \item \textbf{Projection:} Compute for all $1\leq k \leq n$, 
 $$
c_k(z)\coloneqq \langle \icdf{u(z)} - \icdf{w}, f_k \rangle_{L^2([0,1])}
 $$
 and
 $$
 f_n^{\rm proj}(z)\coloneqq \sum_{k=1}^n c_k(z)f_k.
 $$
 The reduced-order model approximation $u_n(z)^{\rm tPCA, proj}$ of $u(z)$ then reads as
\begin{equation}
u_n(z)^{\rm tPCA, proj}\coloneqq \operatorname{icdf}^{-1}\left( \icdf{w} + f_n^{\rm proj}(z))\right), 
\end{equation}
provided that $f^{\rm proj}_n(z)$ belongs to $\mathcal{K}_{\icdf{w}}$. 
 
\item \textbf{Interpolation:} For all $1\leq k\leq n$, from the knowledge of the values $(c_k(z))_{z\in \rZ_{\rm tr}}$ stored in the offline phase, we compute an interpolation $\overline{c}_k: \rZ \to \mathbb{R}$ such that 
 $$
\overline{c}_k(z) = c_k(z),\quad \forall z\in \rZ_\tr,\, 1\leq k \leq n.
 $$
 We can proceed similarly as before and do a local interpolation. 
We then compute 
\begin{equation}
f_n^{\rm interp}(z)\coloneqq \sum_{k=1}^n  \overline{c}_k(z) f_k,
\end{equation}
and approximate $u(z)$ with
\begin{equation}
u_n(z)^{\rm tPCA, interp}\coloneqq\operatorname{icdf}^{-1}\left( \icdf{w} + f_n^{\rm interp}(z))\right), 
\end{equation}
provided that $f_n^{\rm interp}(z)$ belongs to $\mathcal{K}_{\icdf{w}}$. 
\end{itemize}

\paragraph*{Error decay:}
By introducing a notion of resolution of the finite set $\cT_\tr$ with respect to $\cT$ in terms of $\eps$-coverings, we can derive a convergence result on the average error in the original set $\cM$ which is, as defined in \eqref{eq:av},
$$
e^{\rm tPCA, proj}_\av(\cM, W_2, V_n) \coloneqq
\left( \int_{z\in\rZ} W^2_2\left( u(z), u_n(z)^{\rm tPCA, proj} \right) \,\mathrm{d} \mu(z)  \right)^{1/2}
$$
We next recall the precise definition of $\eps$-covering of a set and its covering number and give a convergence result of the error.
\begin{definition}[$\eps$-covering and covering number]
\label{def:covering}
Let $S$ be a subset of $\corr{(\Pr,W_2)}$. An $\eps$-covering of $S$ is a subset $C$ of $S$ if and only if $S\subseteq \cup_{x\in C} B(x,\eps)$, where $B(x,\eps)$ denotes the ball centered at $x$ of radius $\eps$. The covering number of $S$, denoted $N_S(\eps)$, is the minimum cardinality of any $\eps$-covering of $S$.
\end{definition}

\begin{lemma}
\corr{
If $\cT_\tr$ is an $\eps$-covering of $\cT$, and if $f^{\rm proj}_n(z)\in \mathcal{K}_{\icdf{w}}$ for all $z\in \rZ$, then
$$
e^{\rm tPCA, proj}_\av(\cM, W_2, V_n) \leq \eps + \left( \sum_{k> n}^M \sigma_i^2 \right)^{1/2}.
$$
}
\end{lemma}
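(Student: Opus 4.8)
The plan is to use the isometry of Corollary~\ref{ThExp} to turn the Wasserstein error into an ordinary $L^2([0,1])$ projection error, and then to transfer that error from the continuous parameter to the nearest training parameter via the $\eps$-covering hypothesis. Concretely, I would fix $z\in\rZ$ and write $g(z)\coloneqq\Log{w}(u(z))\in\cT$. The assumption $f^{\rm proj}_n(z)=P_{V_n}g(z)\in\cK_{\icdf{w}}=\cO_w$ guarantees that $\Exp{w}$ is defined at $f^{\rm proj}_n(z)$ and that $\Log{w}\!\left(\Exp{w}(f^{\rm proj}_n(z))\right)=f^{\rm proj}_n(z)$, directly from \eqref{eq:exp}--\eqref{eq:log}. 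Corollary~\ref{ThExp} then yields
$$
W_2\!\left(u(z),u_n(z)^{\rm tPCA,proj}\right)=\left\|g(z)-P_{V_n}g(z)\right\|_{L^2([0,1])}=\left\|(\Id-P_{V_n})g(z)\right\|_{L^2([0,1])},
$$
so that $e^{\rm tPCA,proj}_\av(\cM,W_2,V_n)$ is precisely the $L^2(\mu;L^2([0,1]))$-norm of the residual $(\Id-P_{V_n})g$.

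Next I would bring in the covering. Since $\cT_\tr$ is an $\eps$-covering of $\cT$ and $\Log{w}$ is an isometry, for each $z\in\rZ$ I can select (measurably) a training parameter $z'(z)\in\rZ_\tr$ with $\|g(z)-g(z'(z))\|_{L^2([0,1])}\leq\eps$. As $\Id-P_{V_n}$ is an orthogonal projection, hence $1$-Lipschitz, the triangle inequality gives the pointwise bound
$$
\|(\Id-P_{V_n})g(z)\|_{L^2([0,1])}\leq\|(\Id-P_{V_n})g(z'(z))\|_{L^2([0,1])}+\eps.
$$
Taking $L^2(\mu)$-norms in $z$, using Minkowski's inequality and $\mu(\rZ)=1$, I get $e^{\rm tPCA,proj}_\av\leq\eps+\left(\int_\rZ\|(\Id-P_{V_n})g(z'(z))\|_{L^2([0,1])}^2\,\rd\mu(z)\right)^{1/2}$.

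Finally I would bound the remaining integral by the PCA tail. Partitioning $\rZ$ into the cells $A_{z'}\coloneqq\{z\in\rZ:z'(z)=z'\}$ for $z'\in\rZ_\tr$, the integral equals $\sum_{z'\in\rZ_\tr}\mu(A_{z'})\,\|(\Id-P_{V_n})g(z')\|_{L^2([0,1])}^2$. Because the $A_{z'}$ partition $\rZ$ and $\mu$ is a probability measure, each weight satisfies $\mu(A_{z'})\leq1$, so this is at most $\sum_{z'\in\rZ_\tr}\|(\Id-P_{V_n})g(z')\|_{L^2([0,1])}^2$. Using the PCA expansion $g(z')=\sum_{k=1}^M c_k(z')f_k$ valid on the training set together with orthonormality of the $f_k$, each summand equals $\sum_{k>n}c_k(z')^2$; summing over $z'$ and exchanging the order of summation gives $\sum_{k>n}\|c_k\|_{\ell^2(\rZ_\tr)}^2=\sum_{k>n}\sigma_k^2$. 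Combining the three displays yields the claimed estimate.

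The routine ingredients are the isometry reduction, the $1$-Lipschitz property of $\Id-P_{V_n}$, and Minkowski's inequality. The step I expect to be the crux is the transfer from the continuous parameter to the finite training set: the key point is that rewriting the integral over $z\in\rZ$ as a weighted sum over $\rZ_\tr$ with weights $\mu(A_{z'})\leq1$ lets me discard the weights and recover the \emph{unweighted} tail $\sum_{k>n}\sigma_k^2$, which is exactly what links the average error over all of $\cM$ to a quantity computed solely on $\cM_\tr$.
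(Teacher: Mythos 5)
Your proof is correct and follows essentially the same route as the paper: reduce the $W_2$ error to an $L^2([0,1])$ projection error via the isometry (using the hypothesis $f_n^{\rm proj}(z)\in\cK_{\icdf{w}}$ so that $\Exp{w}$ and $\Log{w}$ invert each other), transfer to the nearest training element via the $\eps$-covering and the $1$-Lipschitz property of $\Id-P_{V_n}$, and bound the residual on the training set by the unnormalized PCA tail $\sum_{k>n}\sigma_k^2$. The only difference is that you spell out the averaging step (the measurable selection $z\mapsto z'(z)$ and the partition of $\rZ$ into cells with weights $\mu(A_{z'})\le 1$), which the paper leaves implicit when it passes from the pointwise bound to $e_{\av}(\cT,\cdot)\le\eps+e_{\av}(\cT_\tr,\cdot)$.
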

\begin{proof}
\corr{
Since for any $u\in\cT$, there exists $\tilde u\in \cT_\tr$ such that $\Vert u - \tilde u\Vert_{L^2([0,1])}\leq \eps$, then
$
\Vert u - P_{V_n} u \Vert_{L^2([0,1])} \leq \eps + \Vert \tilde u - P_{V_n} \tilde u \Vert_{L^2([0,1])}
$
. Thus
$$
e^{\rm tPCA, proj}_\av(\cT, L^2([0,1]), V_n) \leq \eps + e^{\rm tPCA, proj}_\av(\cT_\tr, L^2([0,1]), V_n) = \eps + \left( \sum_{k>n}^N \sigma_i^2 \right)^{1/2}.
$$
We conclude by observing that if $f^{\rm proj}_n(z)\in \mathcal{K}_{\icdf{w}}$ for all $z\in \rZ$, then
$$
e^{\rm tPCA, proj}_\av(\cT, L^2([0,1]), V_n) = e^{\rm tPCA, proj}_\av(\cM, W_2, V_n).
$$
}
\end{proof}

\subsection{The barycentric greedy algorithm (gBar)}
\label{sec:gbar}

The potential instabilities of the tPCA method lead us to consider an alternative strategy for the contruction of reduced-order models, based on the use of barycenters~\cite{pennec2018}. The approach that we propose here can be defined for general metric spaces $(V,d)$ which may not be embedded with a Riemannian manifold structure. Contrary to tPCA, it is guaranteed to be stable in the sense that all the steps of the algorithm are well-defined (approximations in $T_wV$ will be in $\cO_w$ by construction). The stability comes at the price of difficulties in connecting theoretically its approximation quality with optimal performance quantities. Thus its quality will be evaluated through numerical examples.

The method relies on a greedy algorithm, and is henceforth refered to hereafter as the \itshape barycentric greedy \normalfont (gBar) method. 
Let $\eps >0$ be a prescribed level of accuracy. The offline phase of the gBar method is an iterative algorithm which can be written as follows: 

\begin{itemize}
 \item \textbf{Initialization:} \normalfont Compute $(z_1,z_2)\in \rZ_{\rm tr}\times \rZ_{\rm tr}$ such that
 $$
 (z_1,z_2) \in \mathop{\rm argmax}_{(\widetilde{z}_1,\widetilde{z}_2) \in \rZ_{\rm tr}\times \rZ_{\rm tr}} d(u(\widetilde{z}_1),u(\widetilde{z}_2))^2,
 $$
 and define $\rU_2\coloneqq \{ u(z_1),u(z_2) \}$. Then compute
 $$
 \Lambda_2(z) \in \mathop{\rm argmin}_{\Lambda_2 \in \Sigma_2} d\left( u(z), \bary(\rU_2, \Lambda_2)\right)^2,\quad \forall z \in \rZ_\tr.
 $$
 \item \textbf{Iteration $n\geq 3$:} 
  Compute $z_{n}\in \rZ_{\rm tr}$ such that
 $$
 z_{n} \in \mathop{\rm argmax}_{\tilde z \in \rZ_{\rm tr}} \mathop{\min}_{b\in \mathcal{B}_{n-1}(\rU_{n-1})} d(u(\tilde z),b)^2.
 $$
and set $\rU_{n}\coloneqq \rU_{n-1} \cup \{ u(z_{n})\}$. Then compute
  $$
 \Lambda_{n}(z) \in \mathop{\rm argmin}_{\Lambda_n \in \Sigma_n} d\left( u(z), \bary(\rU_n, \Lambda_n)\right)^2,\quad \forall z \in \rZ_\tr.
 $$
 The algorithm terminates when
 $$
 \max_{\tilde z\in \rZ_\tr} \mathop{\min}_{b\in \mathcal{B}_{n-1}(\rU_{n-1})} d(u(\tilde{z}),b)^2  = \mathop{\min}_{b\in \mathcal{B}_{n-1}(\rU_{n-1})} d(u(z_n),b)^2< \eps^2.
 $$
\end{itemize}

Note that the gBar algorithm selects via a greedy procedure particular snapshots $\rU_n = \{ u(z_1), \cdots, u(z_n)\}$ in order to approximate as well as possible each element $u(z)\in \mathcal{M}_{\rm tr}$ with its optimal barycenter associated to the family $\rU_n$. The barycentric weights have to be determined via an optimization procedure.


\medskip

Similarly to the tPCA method, we can consider two different versions of the online phase of the gBar algorithm, whose aim is to reconstrct, for a given $n\in \mathbb{N}^*$, 
a reduced-model approximation $u_n^{\rm gBar}(z)$ of $u(z)$ for all $z\in \rZ$. These two different versions consist in the following steps:

\begin{itemize}
 \item \bfseries Projection: \normalfont Let $z\in \rZ$. Compute $\Lambda_n(z) \in \Sigma_n$ a minimizer of 
 $$
 \Lambda_n(z) \in \mathop{\rm argmin}_{\Lambda_n \in \Sigma_n} d(u(z), \bary(\rU_n, \Lambda_n))^2,
 $$
 and choose $u_n^{\rm gBar, proj}(z) \in \bary(\rU_n,\Lambda_n(z))$.
  \item \bfseries Interpolation: \normalfont From the values $\left(\Lambda_n(z)\right)_{z\in \rZ_{\rm tr}}$ which are known from the offline stage, compute an interpolant $\overline{\Lambda}_n: \rZ \to \Sigma_n$ such that 
  $$
  \overline{\Lambda}_n(z) = \Lambda_n(z),\quad \forall z\in \rZ_\tr.
  $$
  For a given $z\in \rZ$, we approximate $u(z)$ with $u_n^{\rm gBar, interp}(z) \in \bary(\rU_n,\overline{\Lambda}_n(z))$.
\end{itemize}

Like for the tPCA method, the only efficient online strategy is the one based on the interpolation of the barycentric coefficients, since the projection method requires the computation of the full solution $u(z)$ for $z\in \rZ$. \corr{Both approaches are purely data-driven and do not involve solving the original PDE in a reduced space or manifold in the online phase.} We compare the quality of both strategies in our numerical tests. 
\corr{
\begin{remark}
Note that for the interpolation of the coefficients in tPCA and gBar, there is no guarantee that the maps connecting parameters to coefficients is smooth. This property seems difficult to establish a priori and it depends on the regularity of Log and Exp and on the specific problem. The regularity in the coefficients may also strongly depend on the choice of the neighbors, hence on the resolution of the training set $\cM_\tr$. The results of Appendix C tend to confirm this fact since they reveal that the approximation error of the procedure is pretty sensitive to the resolution of $\cM_{\tr}$. 
\end{remark}
}

In the particular case where $(V,d) = (\mathcal P_2(\Omega), W_2)$, every step of the greedy barycentric algorithm can be made explicit by means of inverse cumulative distribution functions. The offline phase is:
\begin{itemize}
 \item \textbf{Initialization:} \normalfont Compute $(z_1,z_2)\in \rZ_{\rm tr}\times \rZ_{\rm tr}$ such that
 $$
 (z_1,z_2) \in \mathop{\rm argmax}_{(\widetilde{z}_1,\widetilde{z}_2) \in \rZ_{\rm tr}\times \rZ_{\rm tr}} \left\|\icdf{u(\widetilde{z}_1)} - \icdf{u(\widetilde{z}_2)}\right\|_{L^2(0,1)}^2,
 $$
 and define $\rU_2\coloneqq(u(z_1),u(z_2))$. Compute and store
 \begin{equation}\label{eq:opt1gen}
 \Lambda_2(z)\coloneqq(\lambda_1^2(z), \lambda_2^2(z)) \in \mathop{\rm argmin}_{\Lambda_2\coloneqq(\lambda_1,\lambda_2) \in \Sigma_2} \left\| \icdf{u(z)} - \sum_{k=1}^2 \lambda_k \icdf{u(z_k)}\right\|_{L^2(0,1)}^2,\quad \forall z \in \rZ_{\rm tr}.
 \end{equation}
 \item \textbf{Iteration $n\geq 3$:}
 Given the set of barycentric coefficients
 $$
 \Lambda_{n-1}(z)\coloneqq (\lambda_1^{n-1}(z), \cdots, \lambda_{n-1}^{n-1}(z)),\quad \forall z\in\rZ_\tr.
 $$
 from the previous iteration, find
 \begin{align*}
 z_{n} &  \in \mathop{\rm argmax}_{z \in \rZ_{\rm tr}} \mathop{\min}_{\Lambda_{n-1}\coloneqq(\lambda_1, \cdots, \lambda_{n-1})\in \Sigma_{n-1}} \left\|\icdf{u(z)} - \sum_{k=1}^{n-1} \lambda_k \icdf{u(z_k)}\right\|_{L^2(0,1)}^2\\
 & = \mathop{\rm argmax}_{z \in \rZ_{\rm tr}} \left\|\icdf{u(z)} - \sum_{k=1}^{n-1} \lambda^{n-1}_k(z) \icdf{u(z_k)}\right\|_{L^2(0,1)}^2 \\
 \end{align*}
 and set $\rU_{n}\coloneqq \rU_{n_1}\cup \{ u(z_{n})\}$. Compute and store
 \begin{equation}\label{eq:compcoeffgen}
 \Lambda_{n}(z)\coloneqq(\lambda_1^n(z),\cdots, \lambda_n^n(z))  \in \mathop{\rm argmin}_{\Lambda_n\coloneqq(\lambda_1,\cdots,\lambda_n) \in \Sigma_n}\left\| \icdf{u(z)} - \sum_{k=1}^n \lambda_k \icdf{u(z_k)} \right\|_{L^2(0,1)}^2,\quad \forall z\in \rZ_\tr.
 \end{equation}
 The algorithm terminates if
 $$
 \mathop{\min}_{\Lambda_{n-1}\coloneqq(\lambda_1, \cdots, \lambda_{n-1})\in \Sigma_{n-1}} \left\|\icdf{u(z_n)} - \sum_{k=1}^{n-1} \lambda_k \icdf{u(z_k)}\right\|_{L^2(0,1)}^2 < \eps^2.
 $$
\end{itemize}

For a fixed $n\in \mathbb{N}^*$, the two versions of the online phase of the gBar method read as follows:
\begin{itemize}
 \item \bfseries Projection: \normalfont Given $z\in \rZ$ for which we want to approximate $u(z)$, compute
\begin{equation}\label{eq:projetape}
 \Lambda_n(z) \in \mathop{\rm argmin}_{\Lambda_n\coloneqq(\lambda_1, \cdots, \lambda_n) \in \Sigma_n} \left\| \icdf{u(z)} - \sum_{k=1}^n \lambda_k \icdf{u(z_k)}\right\|_{L^2(0,1)}^2,
 \end{equation}
 and define $u_n^{\rm gBar, proj}(z) = \bary{(\rU_n, \Lambda_n(z))}$. The function $u_n^{\rm gBar, proj}(z)\in \mathcal{P}_2(\Omega)$ can easily be defined through its icdf as
 $$
 \icdf{u_n^{\rm gBar, proj}(z)} = \sum_{k=1}^n \lambda_k(z) \icdf{u(z_k)}.
 $$
  \item \bfseries Interpolation: \normalfont From the known values $\left(\Lambda_n(z)\right)_{z\in \rZ_{\rm tr}}$, compute an interpolation 
  $\overline{\Lambda}_n: \rZ \to \Sigma_n$ such that
  $$
  \overline{\Lambda}_n(z) = \Lambda_n(z),\quad \forall z\in \rZ_\tr.
  $$
  For a given target $u(z)$ with $z\in \rZ$, we approximate with $u_n^{\rm gBar, interp}(z)= \bary(\rU_n,\overline{\Lambda}_n(z))$ which is the function of  $\Pr$ such that
 $$
 \icdf{u_n^{\rm gBar, interp}(z)} = \sum_{k=1}^n \overline{\lambda}_k(z) \icdf{u(z_k)}, 
 $$
 where $\overline{\Lambda}_n(z) = (\overline{\lambda}_1(z), \cdots, \overline{\lambda}_n(z))$. 
\end{itemize}

Notice that \eqref{eq:opt1gen} and \eqref{eq:compcoeffgen} amounts to solving a simple convex quadratic programming problem.

\section{Numerical cost}
\label{sec:cost}
We give simple estimates on the numerical complexity of the tPCA and gBar methods in terms of the number of operations. We consider the versions where we do interpolation instead of projection. For this, we introduce some perliminary notation. Let $\cost(u(z))$ be the numerical cost to compute a given snapshot $u(z)=u(t,y)$. \corr{Let us assume that} the full-order PDE discretization has a uniform spatial mesh with $\cN$ degrees of freedom and uses a simple implicit Euler time intergration with time step $\delta t$. \corr{If the equation is linear, then} $\cost(u(z))\sim \cN^3 t /\delta t$ with a direct linear system solver, or $\cost(u(z))\sim k \cN^2 t /\delta t$ with an iterative solver requiring $k$ iterations to converge. \corr{In the case where the equation is nonlinear, then each time step requires the solution of a nonlinear system, which can be dealt with a Newton-type method. If $Q$ iterations are performed, the overall complexity is of order $\cost(u(z))\sim Q \cN^3 t /\delta t$ with a direct linear system solver, or $\cost(u(z))\sim Q k \cN^2 t /\delta t$.}

Let $\cost(\Log{w})$ and $\cost(\Exp{w})$ be the cost of computing the logarithmic and exponential maps.

\paragraph*{tPCA:} In the offline phase, we have to compute $N$ snapshots, compute their logarithmic images  to get $\cT_\tr$, and perform a PCA on $\cT_\tr$. Thus $\cost_{\text{offline}}^{\text{tPCA}} = \sum_{z\in\rZ_\tr} \cost(u(z)) + N \cost(\Log{w})  + \cost^{\text{PCA}}$. Counting the cost of computing the covariance matrix and the eigenvalue computation, we have $\cost^{\text{PCA}}\sim N \cN^2 + \cN^3$. Therefore
$$
\cost_{\text{offline}}^{\text{tPCA}} \sim N( \cN^3 T /\delta t + \cost(\Log{w} )) + N \cN^2 + \cN^3.
$$
For a given $z\in\rZ$, to approximate $u(z)$ with $u_n(z)^{\rm tPCA, interp}$ in the online phase, we have to compute an interpolant and an exponential mapping. If we do the local interpolation of each coefficient $c_k(z)$ with $r$ neighbors, the cost of the interpolation is of order $r^3+p\ln (N)$, where $r^3$ is the cost to solve the final linear system and $p\ln(N)$ is the cost of finding the $r$ nearest neighbors with state of the art algorithms like the ball tree. As a result
$$
\cost(u_n(z)^{\rm tPCA, interp}) \sim  n(r^3+p\ln (N)) + \cost(\Exp{w}),
$$
This cost has to be compared to $\cost(u(z))\sim \cN^3 t /\delta t$, the cost of computing $u(z)$ with the full order model. We clearly see the critical role that the cost of the exponential mapping plays in the efficiency of the reduction method. As already brought up in section \ref{sec:tPCA}, $\cost(\Exp{w})$ can in general be expensive since it may require to solve a problem in the full space.  In the case of $W_2$  in one space dimension, the cost is strongly reduced since the exponential map has an explicit expression. The problem of building good surrogates of the exponential mapping is a topic by itself and its treatment is deferred to future works.

\paragraph*{gBar:} If we perform $n$ steps in the barycentric greedy algorithm, the offline cost scales like 
$$
\cost_{\text{offline}}^{\text{gBar}} \sim N\left(\cN^3 T /\delta t + \cost(\Log{w} ) + n \cost_n(\text{best Bar})\right)
$$
where $\cost_n(\text{best Bar})$ is the cost of computing the best barycenter of a function $u$ with respect to a set $\rU_n$ of $n$ functions. It is equal to the complexity of solving the cone quadratic optimization problem \eqref{eq:minbary} and is typically proportional to $n^2$. In the online phase, the cost to approximate a given target $u(z)$ is
$$
\cost(u_n(z)^{\rm tPCA, interp}) \sim  r^3+p\ln (N) + \cost(\Exp{w}).
$$
Like before, the cost $\cost(\Exp{w})$ is the critical part for the efficiency.

\section{Numerical examples}
\label{sec:num}
As a support for our tests, we consider four different conservative PDEs:
\begin{itemize}
\item The above discussed inviscid Burgers' equation for which we have explicit expressions of the solutions and icdf (see section \ref{sec:inviscid}).
\item The version with viscosity of the previous Burgers' equation.
\item A Camassa Holm equation.
\item A Korteveg de Vries equation.
\end{itemize}
For each PDE, we compare the performance of the four following model reduction methods:
\begin{itemize}
\item The classical PCA method in $L^2$,
\item The tangent PCA method (with projection) in $W_2$,
\item The gBar method (with interpolation and projection) in $W_2$.
\end{itemize}
The performance is measured in terms of the average and worst case approximation error of a set on a discrete test set of 500 functions. \corr{Each test set is different from the training set $\cM_{\tr}$. The training set is composed of randomly generated snapshots. For every example, the number of training snapshots is $\#\cM_{\tr} = 5.10^3$. The size and precise selection of the training snapshots defines the resolution of $\cM_\tr$ since it fixes the smallest value $\eps_\tr$ such that $\cM_{\tr}$ is an $\eps_\tr$-covering of $\cM$ (see Definition \ref{def:covering}). One difficulty is that it is hard to estimate $\eps_\tr$ in practice. Also, for a given target resolution $\eps>0$, the covering number $N_S(\eps)$ becomes potentially very large for small $\eps$ as the parameter dimension becomes large due to the curse of dimensionality. Recent theoretical results obtained in [8] show that in certain relevant PDE classes, $\eps$-coverings can be replaced by random training sets of smaller cardinality in the reduced basis greedy algorithm \cite{CDD2018}. One interesting direction for further research is to develop similar ideas in the context of the present PDEs and the present algorithms. This is however beyond the scope of this paper so we limit ourselves to illustrate numerically the impact of the size $\#\cM_{\tr}$ in Appendix \ref{appendix:training-size}.}

\corr{
In addition to the error study, we also provide run time statistics but only for the case of the viscous Burgers' equation since it is the only example that involves a high-fidelity solver. In the case of the inviscid Burgers' equation and KdV, the exact solutions can be explicitly written down with formulas so we did not use a solver to which we can compare ourselves to. In the case of the Camassa Holm equation, the solution was nearly analytic too and we could not consider its numerical solution as a representative example which involves a solver.
}

The code to reproduce the numerical results is available online at:
\begin{center}
\href{https://github.com/olga-mula/2019-RBM-metric-spaces}{https://github.com/olga-mula/2019-RBM-metric-spaces}
\end{center}
For each PDE example, we also provide reconstruction videos of a full propagation on the same link.

\newcommand{\plotdecaysvd}[1]{
\begin{figure}[htbp!]
\centering
\includegraphics[width=0.45\textwidth]{img/#1/decay-err-training.pdf}
\caption{#1 equation: convergence of average errors in training set.}
\label{fig:decay-svd-#1}
\end{figure}
}
\newcommand{\plotdecaysvdall}[4]{
\begin{figure}[htbp!]
\centering
\begin{subfigure}[b]{0.4\textwidth}
\includegraphics[width=\textwidth]{img/#1/decay-err-training.pdf}
\caption{#1 equation.}
\label{fig:decay-svd-#1}
\end{subfigure}
~
\begin{subfigure}[b]{0.4\textwidth}
\includegraphics[width=\textwidth]{img/#2/decay-err-training.pdf}
\caption{#2 equation.}
\label{fig:decay-svd-#2}
\end{subfigure}
~
\begin{subfigure}[b]{0.4\textwidth}
\includegraphics[width=\textwidth]{img/#3/decay-err-training.pdf}
\caption{#3 equation.}
\label{fig:decay-svd-#3}
\end{subfigure}
~
\begin{subfigure}[b]{0.4\textwidth}
\includegraphics[width=\textwidth]{img/#4/decay-err-training.pdf}
\caption{#4 equation.}
\label{fig:decay-svd-#4}
\end{subfigure}
\caption{Convergence of average errors in training set.}
\end{figure}
}
\newcommand{\ploterror}[2]{
\begin{figure}[htbp!]
\includegraphics[width=0.45\textwidth]{img/#2/error/#1-natural-norms.pdf}
\includegraphics[width=0.45\textwidth]{img/#2/error/#1-Hminus1.pdf}
\caption{#2 equation: convergence errors in #1 sense (left: natural norms, right: $H^{-1}$ norm).}
\label{fig:error-#1-#2}
\end{figure}
}

\newcommand{\ploterrorall}[1]{
\begin{figure}[htbp!]
\centering
\includegraphics[width=0.4\textwidth]{img/#1/error/av-natural-norms.pdf}\quad
\includegraphics[width=0.4\textwidth]{img/#1/error/av-Hminus1.pdf}\\
\includegraphics[width=0.4\textwidth]{img/#1/error/wc-natural-norms.pdf}\quad
\includegraphics[width=0.4\textwidth]{img/#1/error/wc-Hminus1.pdf}
\caption{Errors on $\cMtest$ for the #1 equation. Top figures: average error. Bottom figures: worst case error. Left: natural norms. Right: $H^{-1}$ norm.}
\label{fig:error-#1}
\end{figure}
}

\newcommand{\plotfun}[3]{
\begin{figure}[htbp!]
\centering
\includegraphics[width=0.4\textwidth]{img/#1/n-#2/fun-exact.pdf}
\\
\includegraphics[width=0.4\textwidth]{img/#1/n-#2/fun-PCA.pdf}
\includegraphics[width=0.4\textwidth]{img/#1/n-#3/fun-PCA.pdf}
\\
\includegraphics[width=0.4\textwidth]{img/#1/n-#2/fun-tPCA.pdf}
\includegraphics[width=0.4\textwidth]{img/#1/n-#3/fun-tPCA.pdf}
\\
\includegraphics[width=0.4\textwidth]{img/#1/n-#2/fun-bary.pdf}
\includegraphics[width=0.4\textwidth]{img/#1/n-#3/fun-bary.pdf}
\caption{#1 equation: Reconstruction of a function with $n=#2$ (left) and $n=#3$ (right). Black: exact function. Red: PCA. Green: tPCA. Blue: Barycenter.}
\label{fig:plotfun-#1}
\end{figure}
}


\newcommand{\spike}[2]{
\begin{figure}[htbp!]
\centering
\includegraphics[width=0.45\textwidth]{img/#1/n-#2/fun-exact.pdf}
\includegraphics[width=0.45\textwidth]{img/#1/n-#2/fun-tPCA.pdf}
\caption{Instabilities of tPCA. Left: Target function. Right: Approximation with tPCA (note the spike).}
\label{fig:spike-#1}
\end{figure}
}


\subsection{Inviscid Burgers' equation}
\label{sec:burgers-num}
We consider the same parametric PDE as the one of section \ref{sec:inviscid} and focus first on the average error decay of PCA and tPCA in the training set $\cM_\tr$, denoted by $e_\av(\cM_\tr, L^2, V_n^{\PCA})$ and $e_\av(\cM_\tr, W_2, V_n^{\tPCA})$.
Figure \ref{fig:decay-svd-Burgers} shows that the error with tPCA decreases much faster than the one with PCA. This is connected to the fact that the decay of the $n$-width $\delta_n(\cM, L^2(\Omega))$ and associated singular values is much slower than $\delta_n(\cT, L^2([0,1]))$, which is the width exploited in tPCA (see sections \ref{sec:inviscid} and \ref{sec:tPCA}).


\plotdecaysvdall{Burgers}{ViscousBurgers}{CH}{KdV}

Figure \ref{fig:error-Burgers} gives the errors in average and worst case over $\cMtest$. The plots on the left measure the errors in the natural norm of each method, that is, $L^2$ for PCA and $W_2$ for the rest.
Note first of all the dramatic difference in the behavior of the error between PCA (which is very slow) and the rest of the approaches (which is much faster). Also, we can see that tPCA presents a faster decay rate compared to the approach with barycenters. 
This may lead to think that tPCA is a better choice for the present context but to confirm this it is necessary to mesure errors in a common metric which is ``fair'' for all approaches and which also quantifies the potential numerical instabilities of tPCA.
Since we are looking for metrics that quantify the quality of transport rather than spatial averages, we discard the $L^2$ metric in favor of the $H^{-1}$ metric which can be seen as a relaxation of the $W_2$ distance. \corr{The space $H^{-1}$ is taken here as the dual of $H^{1}_0$ and its norm computed accordingly.}

The plots on the right in Figure \ref{fig:error-Burgers} measure the errors in this norm.
We again observe the superiority of tPCA and the barycenters' approach with respect to the PCA. The plateau that tPCA and the approach with barycenters reach at a value around $10^{-3}$ is due to the fact that the $H^{-1}$ requires to invert a Laplace operator. For this, we used in our case a discretization with P1 finite elements on a spatial mesh of size $h\approx 5.10^{-4}$.
So the plateau is due to this discretization error and not to the fact that the approximation errors do not tend to 0.

\ploterrorall{Burgers}

In Figure \ref{fig:plotfun-Burgers} of Appendix \ref{appendix:plotfun}, we give plots of the reconstruction of one snapshot with all the methods. The PCA approach presents very strong oscillations and fails to capture the sharp-edged form of the snapshot. tPCA and the approach with barycenters give an approximation of higher quality in terms of the ``shape'' of the reconstructed function. The tPCA approximation presents unnatural ``spikes'' which are due to the instabilities described in section \ref{sec:tPCA}. We also provide videos of the reconstruction of a full evolution with our methods on the link above.

\subsection{Viscous Burgers' equation}
We consider the same problem as before but add a viscosity term $\nu$ that ranges in $[5.10^{-5}, 0.1]$. The equation is then
\begin{align}
\label{eq:burgers}
\rho_t + \frac 1 2 (\rho^2)_x - \nu \partial_x^2 \rho=0,
\quad
\rho(0,x,y) =
\begin{cases}
&0,\quad -3\leq x <0 \\
&y,\quad 0\leq x < \frac 1 y \\
&0,\quad  \frac 1 y \leq x \leq 5,
\end{cases}
\end{align}
where, like before, the parameter $y\in [1/2,3]$ and $(t,x)\in[0,T]\times \Omega=[0,3]\times [-3,5]$ (the space interval has slightly been enlarged). The parameter domain is here
$$
\rZ = \{ (t, y, \nu) \in [0,3]\times [0.5, 3]\times [5.10^{-5}, 0.1] \}.
$$
We present the results following the same lines as in the previous example. Figure \ref{fig:decay-svd-ViscousBurgers} shows the decay of the error of the PCA and tPCA methods in the average sense and for the functions used in the training phase. Like before, the error decays dramatically faster in tPCA than in PCA.


Next, Figure \ref{fig:error-ViscousBurgers} gives the errors in average and worst case sense for the test set $\cMtest$. If we first examine the errors in the natural norms (plots on the left), it appears that the errors in tPCA do not seem to decay significantly faster than in PCA. Also, the approach with barycenters does not seem to give a very good performance and seems to perform worse than PCA. However, when we examine the errors in the unified $H^{-1}$ metric, we see that all the nonlinear methods are clearly outperforming PCA. This is more in accordance with what we visually observe when we examine the reconstructed functions given by each method (see Figure \ref{fig:plotfun-ViscousBurgers} of Appendix \ref{appendix:plotfun}). Like before, the approximation with PCA has unnatural oscillations. Note that in this particular example the tPCA presents a sharp unnatural spike at the propagation front, due to the above discussed stability issues of this method. This is in contrast to the approach with barycenters which does not suffer from this issue at the cost of slightly degrading the final approximation quality. Like for the other examples, the reader may watch videos of the reconstruction on the link above.

\ploterrorall{ViscousBurgers}

\corr{
We next provide run time statistics for this test case. For any $u\in \cM_\tr$, let $r_{\text{HF}}(u)$, $r_{\PCA}(u)$, $r_{\tPCA}(u)$, $r_{\gBar}(u)$ and $r_{\gBar}^{\text{interp}}(u)$ be the respective run times of the high-fidelity solver, and of the $\PCA$, $\tPCA$, $\gBar$ and $\gBar$ with interpolation methods. The high-fidelity solver uses an explicit piecewise linear finite-volume method to evaluate the advective flux and then discretize the diffusion part implicitly (Crank-Nicolson) with the advective piece as a source to update in time. The resulting discretization is second-order in space and time. For each dynamic, the time step $\delta t$ is fixed to be sufficiently small in order to satisfy a CFL condition. Figure \ref{fig:runtime-stats} shows, as a function of the reduced dimension $n$, the average and the median of the ratios between the run time of a given method and the run time of the high-fidelity computation,
$$
R^*_{\av} = \frac{1}{\# \cM_\tr}\sum_{u\in \cM_\tr} \frac{r_{*}(u)}{r_{\text{HF}}(u)}
\quad\text{and}\quad
R^*_{\text{median}} = \text{median} \left\lbrace \frac{r_{*}(u)}{r_{\text{HF}}(u)} \, : \, u \in \cM_\tr \right\rbrace .
$$
The $*$ symbol denotes all the previous methods. The figure shows that the run time is reduced by a factor of about $100$ in average and of about $500$ in the median for all the methods. We observe that the classical PCA is slightly faster than tPCA and the gBar algorithm. As discussed in Section \ref{sec:cost}, this is essentially due to the fact that we need to compute exponential maps for the latter methods. We may also note that the run time remains essentially with $n$: we think that this is due to the fact that $n$ is pretty small and expect a mild increase for larger values of $n$. One last final observation for these plots is to remark that the \gBar method with interpolation performs almost identically than the one with interpolation.
}

\begin{figure}[h!]
\includegraphics[width=0.45\textwidth]{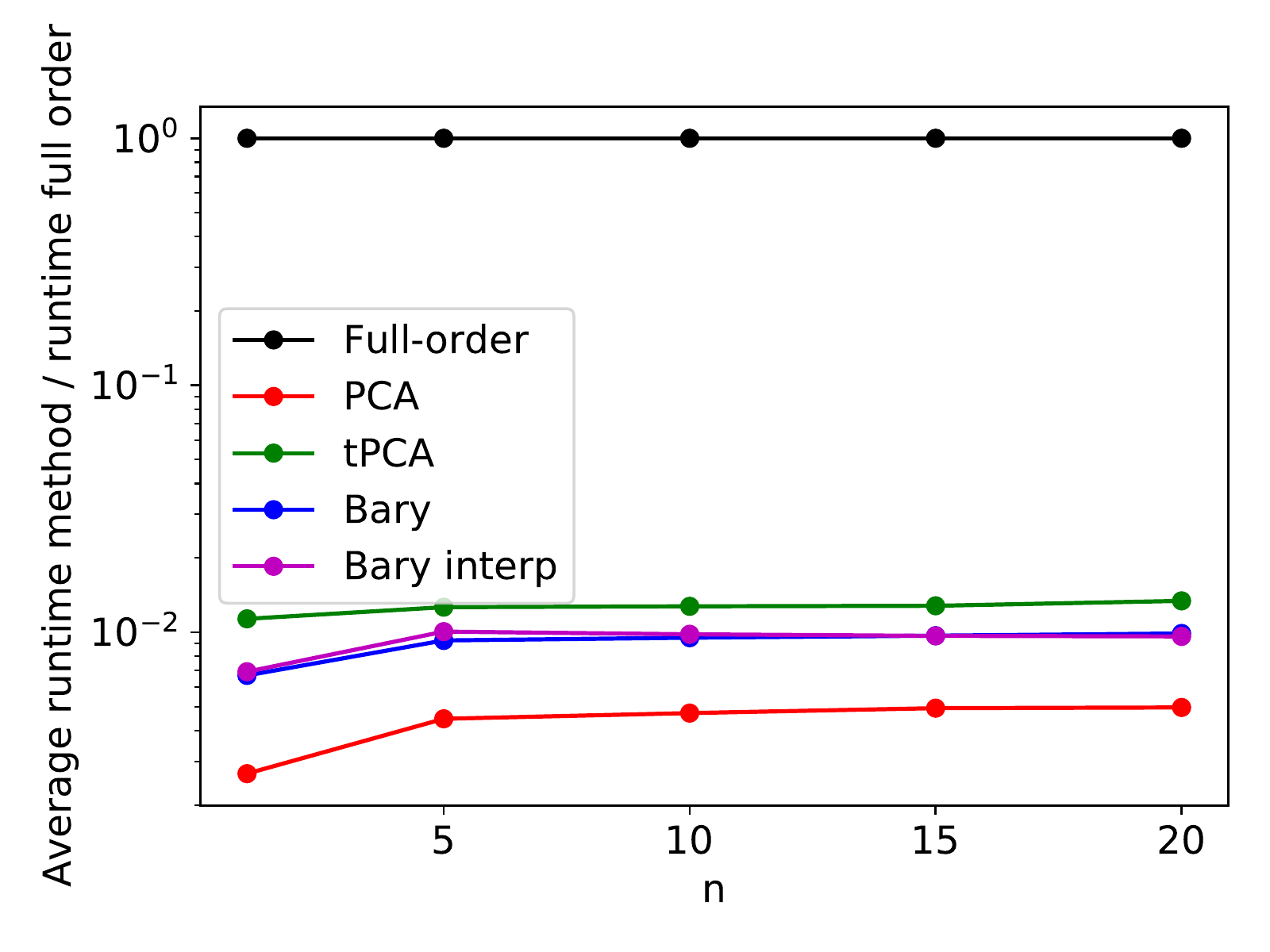}
\includegraphics[width=0.45\textwidth]{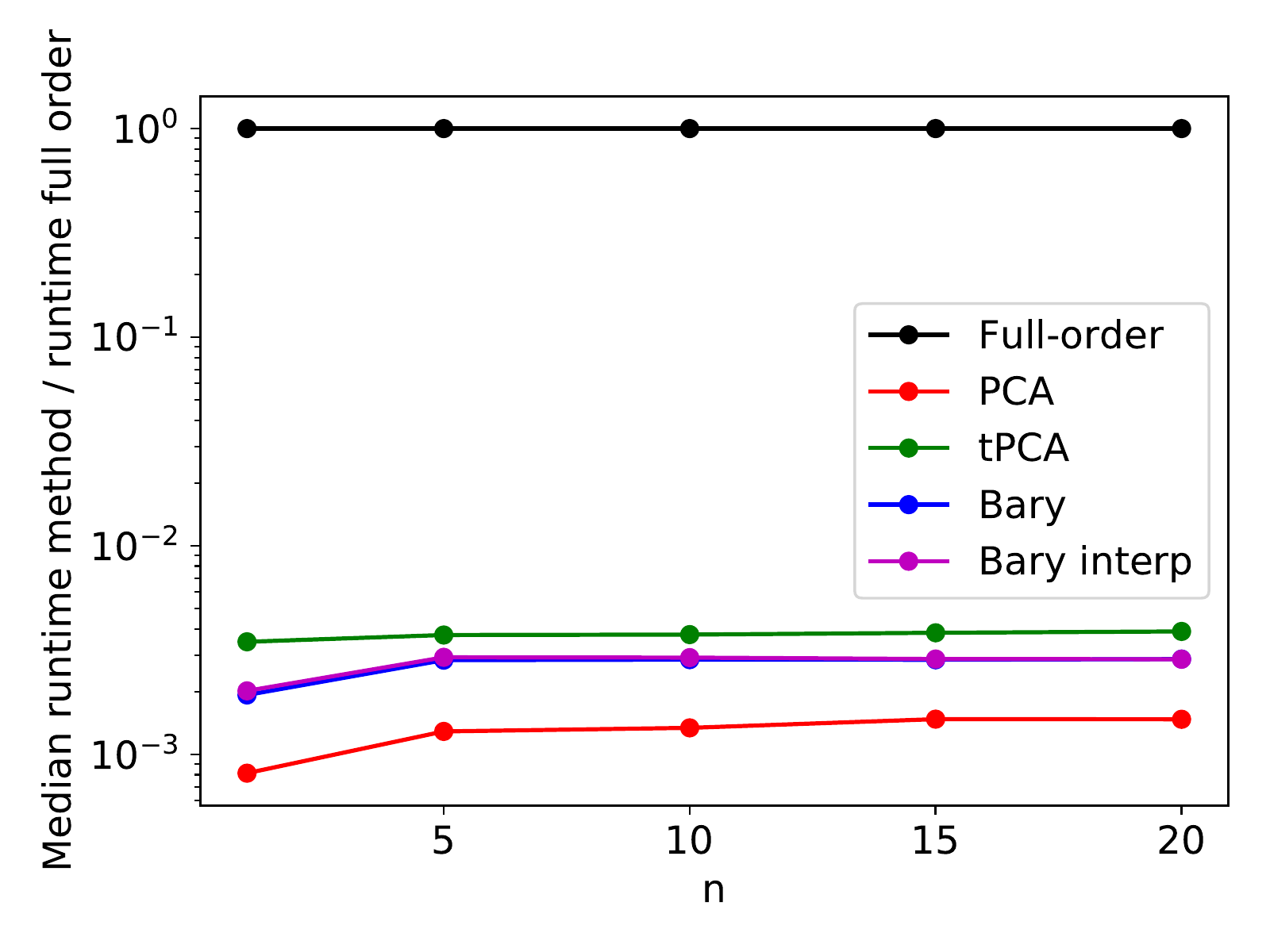}
\caption{Run times as a function of the dimension $n$: Average $R^*_{\av}$ (left plot) and median $R^*_{\text{median}}$ (right plot).} 
\label{fig:runtime-stats}
\end{figure}

\corr{
Since the time variable is treated as a parameter in our approach, it is interesting to compare run times with respect to $t$ since we expect that the high-fidely method will be faster for smaller times. Figure \ref{fig:runtime-stats-t} shows the run times to compute each snapshot of $\cM_\tr$ as a function of its corresponding parameter $t$ (we take $n=20$). We observe that the reduced models are significantly faster, even for small values of $t$. As expected, the difference grows as $t$ increases.
}

\begin{figure}[h!]
\centering
\includegraphics[width=0.45\textwidth]{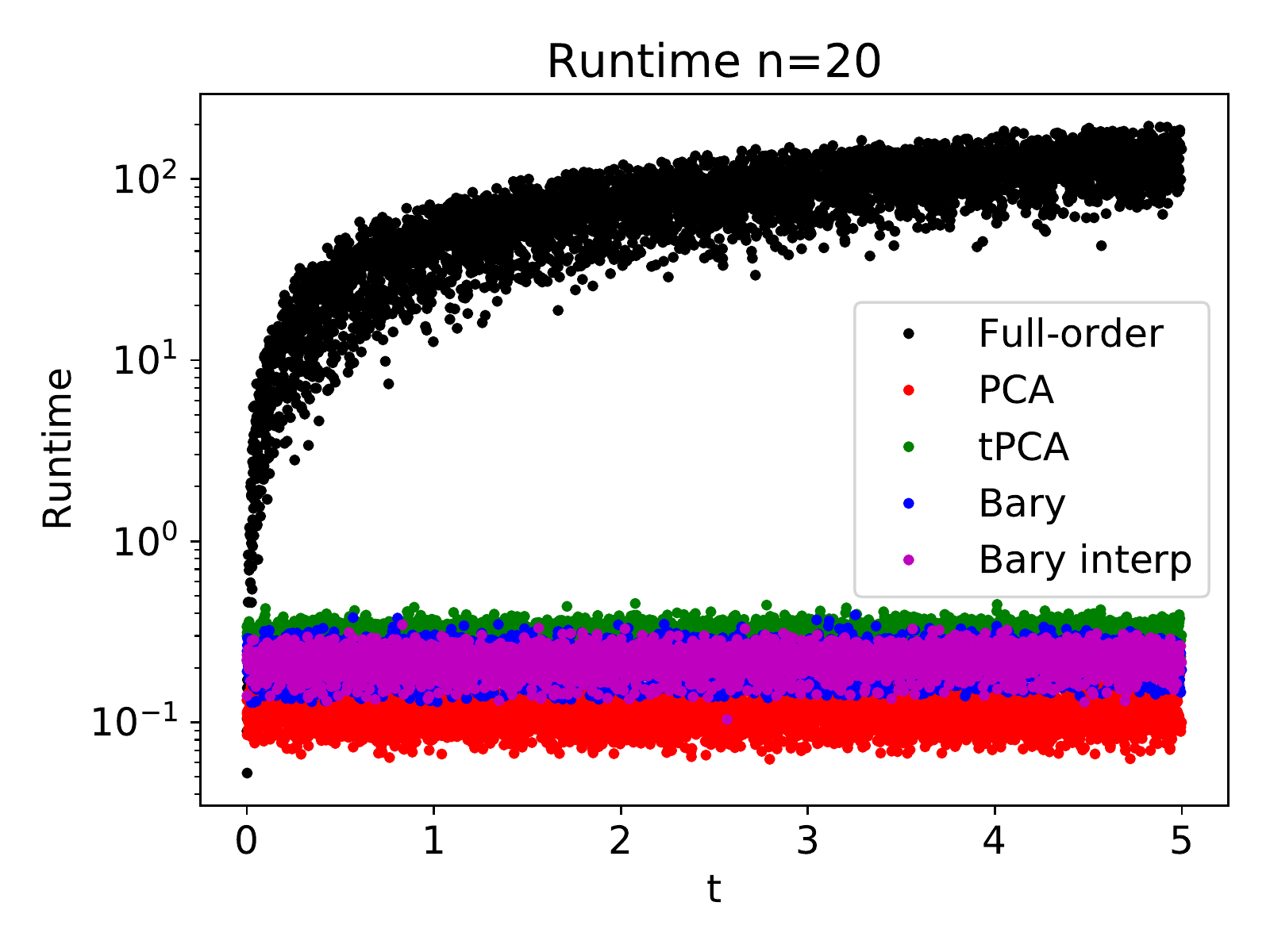}
\caption{Run times to compute each snapshot of $\cM_\tr$ as a function of the parameter $t$ ($n=20$).}
\label{fig:runtime-stats-t}
\end{figure}

\subsection{The Camassa-Holm equation}
We next consider the dispersionless Camassa-Holm equation which, for $\alpha>0$ given, consists in finding the solution $\rho$ of
\begin{equation} 
\partial_t m + \rho m_x + 2 m \partial_x \rho = 0,\quad \text{with } m = \rho - \alpha^2 \partial_{xx} \rho.
\end{equation}
The equation admits peakon solutions of the form
$$
\rho(t,x) = \frac 1 2 \sum_{i=1}^N p_i(t) e^{-|x - q_i(t)| /\alpha},
$$
where the evolution of the set of peakon parameters $p_i(t)$ and $q_i(t)$ satisfies the canonical Hamiltonian dynamics
\begin{equation}
\dot q_i(t) = \frac{\partial h_N}{\partial p_i}
\quad\text{and}\quad
\dot p_i(t) = - \frac{\partial h_N}{\partial q_i}
\end{equation}
for $i=1,\dots, N$, with Hamiltonian given by
\begin{equation}
h_N = \frac 1 4 \sum_{i,j=1}^N p_i p_j e^{-|q_i - q_j|/\alpha}.
\end{equation}
For the parametrized model considered in our tests, we set $N=2$ and $\alpha = 1$. The initial values of the peakons parameters are $p_1(0)=0.2$, $p_2(0)=0.8$, $q_1(0)\in [-2, 2]$ and $q_2(0)=-5$. The parameter domain is then
$$
\rZ = \{ (t, q_1(0)) \in [0,40]\times [-2,2]  \}.
$$
Figure \ref{fig:decay-svd-CH} shows the decay of the error of the PCA and tPCA methods in the average sense and for the functions used in the training phase. Like before, the error decays dramatically faster in tPCA than in PCA.


Figure \ref{fig:error-CH} gives the errors in average and worst case sense for the test set $\cMtest$. Very similar observations like for the case of Viscous Burgers' hold: in the natural norms, the errors in tPCA do not seem to decay significantly faster than in PCA. The approach with barycenters does not seem to give a very good performance and seems to perform worse than PCA. Very different conclusions can be drawn if we consider the unified $H^{-1}$ metric, where we see that all the nonlinear methods are clearly outperforming PCA. Like previously, this is confirmed visually (see Figure \ref{fig:plotfun-CH} of Appendix \ref{appendix:plotfun}).

\ploterrorall{CH}

\subsection{Two-soliton solution of the Korteweg-de-Vries equation}
The previous examples give solid numerical evidence that tPCA and our approach with barycenters are an efficient strategy to build reduced models of certain classes of transport dominated conservative PDEs. We finish the section on numerical results by presenting a PDE which poses certain challenges to the present methodology. This will serve as a transition that will lead us to the end of the paper where we present possible extensions to the present approach.

We consider a two soliton solution of the Korteweg-de-Vries equation (KdV) which, expressed in normalized units, reads for all $x\in\bR$,
\begin{align*}
\partial_t \rho + 6\rho \partial_x \rho + \partial_x^3 \rho = 0.
\end{align*}
The equation admits a general $2$--soliton solution
\begin{eqnarray}
\rho(x,t) = -2 \partial^2_x \log \det(I + A(x,t)),
\end{eqnarray}
where $A(x,t) \in \bR^{2\times 2}$ is the interaction matrix whose components $a_{i,j}$ are
$$
a_{i,j}(x,t)= \frac{c_i c_j}{k_i + k_j} \exp\left( (k_i+k_j)x - (k_i^3 + k_j^3)t\right),\quad 1\leq i,j\leq 2.
$$
For any $t>0$, the total mass is equal to
$$
\int_\bR \rho(t,x) \dx = 4 (k_1+k_2).
$$
To illustrate the performance of our approach, we set
$$
T=2.5.10^{-3},\;
c_1 = 2,\;
c_2=3/2,\;
k_1 = 30 - k_2.
$$
The parameter domain is
$$
\rZ = \{ (t, k_2)\in [0, 2.5.10^{-3}] \times  [16, 22] \}
$$
It follows that the total mass is equal to 120 for all the parametric solutions.

\corr{Similarly as before, Figure \ref{fig:decay-svd-KdV} shows the decay of the error of the PCA and tPCA methods in the average sense and for the functions used in the training phase.} Figure \ref{fig:error-KdV} gives the errors in average and worst case sense for a set of 500 test functions. The observed behavior resembles the one of our previous examples. However, note that this time the average errors are roughly of order 10 while they were of order $10^{-2}$ or $10^{-3}$ in the previous examples. A visual inspection of the reconstructed functions reveals that PCA is in this case not producing ``worse-looking'' reconstructions than the nonlinear methods (see Figure \ref{fig:plotfun-KdV} of Appendix \ref{appendix:plotfun}). Two points can explain these observations: first, note that the exact solution does not present as sharp edges as in the other examples so this is playing in favor of PCA since it tends to produce oscillatory reconstructions. Second, in the present KdV evolution, we have two peakons of different masses propagating at different speeds. The fast peakon may overcome the slow one after a transition in which both peakons merge into a single one. Our strategy is based on a nonlinear mapping where translations can be treated simply but the mapping does not seem to be enough adapted to treat the case of fusion and separation of masses. This motivates to search for further nonlinear transformations to address this behavior. We outline some possibilities as well as further challenges in the next section.

\ploterrorall{KdV}

\section{Conclusions and perspectives}
\label{sec:extensions}
In this work, we have shown how to exploit the geometry of some hyperbolic PDE in order to propose efficient reduced basis methods, one based on tangent PCA and one based on barycenters.
These two methods are developed using the Wasserstein metric which captures the advection effects. There are multiple possible extensions to 
this work, among which stand the following:

\begin{itemize}
 \item A first interesting theoretical question is the following: can the results of Section~\ref{sec:kolmo} on the decay of the Kolmogorov $n$-widths of the set $\mathcal{T}$ be extended to more general transport equations, and under which assumptions? 
 \item Can one obtain theoretical convergence rates of the greedy barycentric algorithm similar to the existing results on greedy algorithms for linear reduced bases~\cite{BCMN2018, MMT2016}?
 \item Can one reduce more complicated problems, i.e. defined on sets of dimension greater than one, or non-conservative problems, using one of the two presented algorithms? 
 This seems to be the case at least for the barycentric greedy algorithm. Indeed, one can consider for instance the Hellinger-Kantorovich distance~\cite{Mielke,CPSV2015}, which appears to yield interesting approximation properties 
 for problems where both transport and mass transfer phenomena occur. The computation of approximate barycenters of densities defined on spaces of dimension gerater than one can be done using entropic regularization 
 together with the Sinkhorn algorithm as 
 proposed in~\cite{CPSV2016} for instance.
 \item More generally, how to systematically select the best metric with respect to a given PDE and how to build non expensive surrogates of the exponential map? A promising direction to address these two issues seems to use machine learning algorithm to learn the metric for a given PDE \cite{2019arXiv190409524N}, and then to learn the associated exponential map \cite{2017arXiv171105766D,2019arXiv190600139S}.
\end{itemize}
We intend to address these issues in forthcoming works.

\bibliographystyle{plain}
\bibliography{references}

\corr{
\section*{Acknowledgements}}
\corr{The authors are grateful to the anonymous referees for their valuable comments which greatly improved the quality of the paper. This research was supported by the ANR JCJC grant COMODO, ANR JCJC grant ADAPT, and the EMERGENCES grant of the Paris City Council ``Models and Measures''.
}

\appendix

\corr{
\section{Alternative proof of (\ref{eq:kolmoL2trans})}
\label{appendix:proof}
}

\corr{
We give here an alternative proof to (\ref{eq:kolmoL2trans}) to the one given in~\cite{OR2016}.}

\corr{
\begin{proof}[Alternative proof of (\ref{eq:kolmoL2trans})]
Let $n\in \mathbb{N}^*$. The inequality $d_n(\cM, L^2(\Omega)) \geq \delta_n(\cM, L^2(\Omega)) $ is a direct consequence of the fact that for all finite-dimensional subspace $V_n\subset L^2(\Omega)$ of dimension $n$, 
 $$
\mathop{\sup}_{z\in \rZ} \| u(z) - P_{V_n}u(z) \|_{L^2(\Omega)} \geq \left( \int_{\rZ} \| u(z) - P_{V_n}u(z) \|^2_{L^2(\Omega)}\,dz\right)^{1/2}.
$$
Thus, we only have to prove that there exists a constant $c>0$ such that for all $n\in \mathbb{N}^*$, 
$$
\delta_n(\cM, L^2(\Omega)) \geq c n^{-1/2}. 
$$
 To obtain this result, we express $\delta_n(\cM, L^2(\Omega))$ as a function of the eigenvalues of the so-called correlation operator $K: L^2(\Omega) \to L^2(\Omega)$ which is defined as follows: 
$$
 \forall v \in L^2(\Omega), \quad (Kv)(x) = \int_{\Omega} \kappa(x,x') v(x')\,dx', 
 $$
where $\kappa(x,x')\coloneqq \int_{z\in \rZ} u(z)(x) u(z)(x')\,dy$. Since $u(z) = \charFun_{[z-1,z]}$, it holds that 
$$
\kappa(x,x') = \left\{
\begin{array}{ll}
 \max(0, 1- |x-x'|) & \mbox{ if } (x,x') \in [-1,0]\times[0,1] \cup [0,1] \times [-1,0],\\
 1 - \max(x,x') & \mbox{ if } (x,x') \in [0,1]\times[0,1] ,\\
 1 - \max(|x|,|x'|) & \mbox{ if } (x,x') \in [-1,0]\times[-1,0] ,\\
\end{array}\right.
$$
The function $\kappa$ is continuous and piecewise affine on $\Omega\times \Omega$, and $\kappa =0$ on $\partial( \Omega\times\Omega)$. We denote by $e_{k,k'}(x)\coloneqq \frac{1}{2}e^{i(kx+k'x')}$ for all $k,k'\in \mathbb{Z}$, so the the Fourier 
coefficient of $\kappa$ associated to the $(k,k')$ Fourier mode is defined by
$$
\alpha_{k,k'}\coloneqq\langle \kappa, e_{k,k'}\rangle.
$$
It can be easily checked that there exists a constant $C>0$ such that for all $k,k'\in \mathbb{Z}$,
\begin{equation}\label{eq:expre}
|\alpha_{k,k'}|\geq C \frac{1}{(|k| + |k'|)^2}.
\end{equation}
The correlation operator $K$ is a compact self-adjoint non-negative operator on $L^2(\Omega)$. Thus, there exists an orthonormal family $(f_k)_{k\in \mathbb{N}^*}$ and a non-increasing sequence of non-negative real numbers 
$(\sigma_k)_{k\in \mathbb{N}^*}$ going to $0$ as $k$ goes to $+\infty$ such that 
$$
K f_k =\sigma_k f_k, \quad \forall k\in \mathbb{N}^*.
$$
The scalars $\sigma_1 \geq \sigma_2 \geq \cdots \geq 0$ are the eigenvalues of the operator $K$, and it holds that 
$$
\delta_n(\cM, L^2(\Omega)) = \sqrt{\sum_{k\geq n+1} \sigma_k}.
$$
The $n^{th}$ eigenvalue $\sigma_n$ can be identified through the Max-Min formulas
$$
\sigma_n = \mathop{\max}_{
\begin{array}{c}
V_n \subset L^2(\Omega)\\
 \mbox{\rm dim}V_n= n\\
\end{array}}
\min_{\begin{array}{c}
        v_n\in V_n\\
        \|v_n\|_{L^2(\Omega)}=1\\
       \end{array}}
\langle v_n, Kv_n \rangle_{L^2(\Omega)},
$$
where $\langle \cdot, \cdot \rangle_{L^2(\Omega)}$ is the $L^2(\Omega)$ scalar product. We define $V_n\coloneqq \mbox{\rm Span}\left\{ \frac{1}{\sqrt{2}}e^{ik\cdot}, \quad k\in \mathbb{Z}, \; |k|\leq n\right\}$. Using \eqref{eq:expre}, it can easily be checked that there exists a constant $c'>0$ such that for all $n\in \mathbb{N}^*$, 
$$
\min_{\begin{array}{c}
       v_n\in V_n\\
        \|v_n\|_{L^2(\Omega)}=1\\
       \end{array}}
\langle v_n, Kv_n \rangle_{L^2(-1,1)}  =   \min_{\begin{array}{c}
        (\gamma_k)_{|k|\leq n}\in \mathbb{C}^{2n+1}\\
       \sum_{|k|\leq n}|\gamma_k|^2 = 1\\
       \end{array}} \sum_{|k|,|k'|\leq n} \alpha_{k,k'}\gamma_k \overline{\gamma_{k'}} \geq c' \frac{1}{n^2}.
$$
Thus, for all $n\in \mathbb{N}^*$, $ \sigma_n \geq c'n^{-2}$ and there exists a constant $c>0$ such that for all $n\in \mathbb{N}^*$, 
$$
\delta_n(\cM, L^2(\Omega)) = \sqrt{\sum_{k\geq n+1} \sigma_k} \geq c n^{-1/2}.
$$
 \end{proof}
 }

\section{Reconstruction plots}
\label{appendix:plotfun}

\plotfun{Burgers}{5}{10}
\plotfun{ViscousBurgers}{5}{10}
\plotfun{CH}{5}{10}
\plotfun{KdV}{5}{10}

\newpage

\section{\corr{Impact of the size of the training set $\cM_{\tr}$}}
\label{appendix:training-size}
\corr{
We investigate the impact of the size of the training set $\cM_\tr$ on the reconstruction errors of the Burgers' test case. For this, we randomly generate 10 realisations of training sets of cardinality
$$
\#\cM_\tr=10^2,\  5.10^2,\ 10^3,\ 3.10^3,\ 5.10^3,\ 7.10^3.
$$
For each realisation of $\cM_\tr$, we evaluate the error on a randomly generated test set $\cM_\test$ of 500 samples. Then we average the mean and wort case error over the 10 realizations of $\cM_\tr$ with the same cardinality.}

\corr{
Figure \ref{fig:size-av-natural} shows the mean error on in the natural norm of each algorithm ($L_2$ for PCA and $W_2$ for tPCA and the approach with the gBar algorithm). The general observed behavior is that, for a fixed dimension $n$, the approximation error tends to decrease as the size $\#\cM_\tr$ increases. Figure \ref{fig:size-av-natural} also shows that PCA and tPCA seem more robust to the resolution of $\cM_\tr$ than the approach with barycenters. The errors are particularly sensitive to the resolution of $\cM_\tr$ in the case when we interpolate the coefficients of the barycenter to obtain a fully online procedure. This behavior comes from our interpolation strategy: for each targeted snapshot $u(y)$ to reconstruct, we build a local interpolant based on its neighbors in $\cM_\tr$. As a result, the finer $\cM_\tr$ is, the closest the neighbors will be, and one can expect to obtain a more precise and stable procedure as is observed in the plots.
}

\corr{
Figures \ref{fig:size-av-Hminus1} to \ref{fig:size-wc-Hminus1} show the behavior of the mean and worst case errors for the natural norms of each algorithm and for the $H^{-1}$ norm. We may draw similar conclusions to the ones already discussed for Figure \ref{fig:size-av-natural}.
}

\newcommand{\isaverage}[1]{ \ifthenelse{\equal{#1}{av}}{Average}{Worst case}  }
\newcommand{\isnatural}[1]{ \ifthenelse{\equal{#1}{natural}}{Natural}{$H^{-1}$}  }

\newcommand{\sizeTraining}[2]{
\begin{figure}[htbp!]
\centering
\includegraphics[width=0.4\textwidth]{img/Burgers/size-training-average/#1-pca-#2-vs-sizeTraining}
\includegraphics[width=0.4\textwidth]{img/Burgers/size-training-average/#1-tpca-#2-vs-sizeTraining}
\includegraphics[width=0.4\textwidth]{img/Burgers/size-training-average/#1-bary-#2-vs-sizeTraining}
\includegraphics[width=0.4\textwidth]{img/Burgers/size-training-average/#1-bary-interp-#2-vs-sizeTraining}
\caption{Burgers' equation. Impact of the cardinality of $\cM_{\tr}$.~\protect\isaverage{#1} error over 10 realizations.~\protect\isnatural{#2} norm.}
\label{fig:size-#1-#2}
\end{figure}
}

\sizeTraining{av}{natural}
\sizeTraining{av}{Hminus1}
\sizeTraining{wc}{natural}
\sizeTraining{wc}{Hminus1}

\end{document}